\newtheorem{theorem}{Theorem}
\newtheorem{lemma}{Lemma}[section]
\newtheorem{remark}{Remark}[section]
\newtheorem{corollary}{Corollary}[section]
\newtheorem{proposition}{Proposition}[section]
\numberwithin{equation}{section}
\begin{document}
\title{Some refinements of classical inequalities}
\author{Shigeru Furuichi$^1$ and Hamid Reza Moradi$^2$}
\subjclass[2010]{Primary 47A63, Secondary 46L05, 47A60.}
\keywords{Operator inequality; Hermite-Hadamard inequality; Young inequality; Heron mean.} 
\maketitle

\begin{abstract}
We give some new refinements and reverses Young inequalities in both additive-type and multiplicative-type for two positive numbers/operators. We show our advantages by comparing with known results. A few applications are also given. Some results relevant to the Heron mean are also considered.
\end{abstract}
\pagestyle{myheadings}
\markboth{\centerline {On some refinements for classical inequalities}}
{\centerline {S. Furuichi \& H.R. Moradi}}
\bigskip
\bigskip
\section{Introduction and Preliminaries}
\vskip 0.4 true cm
In this paper, an operator means a bound linear operator on a Hilbert space $\mathcal{H}$. An operator $X$ is said to be positive (denoted by $X\ge 0$) if $\left\langle Xy,y \right\rangle \ge 0$ for all $y\in \mathcal{H}$, and also an operator $X$ is said to be strictly positive (denoted by $X>0$) if $X$ is positive and invertible. For convenience, we often use the following notations:
\[\begin{aligned}
  & A{{!}_{v}}B\equiv {{\left( \left( 1-v \right){{A}^{-1}}+v{{B}^{-1}} \right)}^{-1}},\qquad A{{\sharp}_{v}}B\equiv {{A}^{\frac{1}{2}}}{{\left( {{A}^{-\frac{1}{2}}}B{{A}^{-\frac{1}{2}}} \right)}^{v}}{{A}^{\frac{1}{2}}}, \\ 
 & {{H}_{v}}\left( A,B \right)\equiv \frac{A{{\sharp}_{v}}B+A{{\sharp}_{1-v}}B}{2},\qquad A{{\nabla }_{v}}B\equiv \left( 1-v \right)A+vB, \\ 
\end{aligned}\]
where $A,B$ are strictly positive operators and $0\le v \le 1$. When $v =\frac{1}{2}$, we write $A!B$, $A\sharp B$, $H\left( A,B \right)$ and $A\nabla B$ for brevity, respectively. 

A fundamental inequality between positive real numbers $a,b$ is the Young inequality, which states 
\[{{a}^{1-v}}{{b}^{v}}\le \left( 1-v \right)a+vb\qquad 0\le v\le 1,\]
with equality if and only if $a=b$. If $v =\frac{1}{2}$, we obtain the arithmetic-geometric mean inequality $\sqrt{ab}\le \frac{a+b}{2}$. Recently, a considerable attention is dedicated to the study of Young inequalities and its operator versions \cite{SC,SM}.

It is well-known that, cf. \cite{furuta1}:
\begin{equation}\label{5}
A{{!}_{v }}B\le A{{\sharp}_{v }}B\le A{{\nabla }_{v }}B \qquad 0 \leq v \leq 1,
\end{equation}
where the second inequality in \eqref{5} is known as the operator arithmetic-geometric mean inequality (or the operator Young inequality). 

Based on the refined scalar Young inequality, Kittaneh and Manasrah \cite{k2} obtained that
\begin{equation}\label{k1}
r\left( A+B-2A\sharp B \right)+A{{\sharp}_{v}}B\le A{{\nabla }_{v}}B\le R\left( A+B-2A\sharp B \right)+A{{\sharp}_{v}}B,
\end{equation}
where $r=\min \left\{ v ,1-v  \right\}$ and $R=\max \left\{ v ,1-v  \right\}$.

Zou et al. \cite{21} refined operator Young inequality with the Kantorovich constant $K\left( x \right)\equiv\frac{{{\left( x+1 \right)}^{2}}}{4x}$, $(x>0)$, and proposed the following result:
\begin{equation}\label{zou}
{{K}^{r}}\left( h \right)A{{\sharp}_{v}}B\le A{{\nabla }_{v}}B,
\end{equation}
where $0<\alpha 'I\le A\le \alpha I\le \beta I\le B\le \beta 'I$ or $0<\alpha 'I\le B\le \alpha I\le \beta I\le A\le \beta 'I$, $h=\frac{\beta }{\alpha }$ and $h'=\frac{\beta '}{\alpha '}$. Note also that the inequality \eqref{zou} improves
Furuichi's result from \cite{Furuichi3}, which includes the well known Specht's ratio instead of
Kantorovich constant.

As for the reverse of the operator Young inequality, under the same conditions, Liao et al. \cite{20} gave the following inequality:
\begin{equation}\label{Liao}
A{{\nabla }_{v }}B\le {{K}^{R}}\left( h \right)A{{\sharp}_{v }}B.
\end{equation}
For more related inequalities and applications, see, e.g., \cite{Furuichi1,Furuichi2,SC,SM}. 

\medskip

This paper intends to give some refinements and reverses for the operator Young inequality via Hermite-Hadamard inequality. That is, the following theorem is one of the main results in this paper. 

\begin{theorem}\label{b}
 Let $A,B$ be strictly positive operators such that $0<h'I\le {{A}^{-\frac{1}{2}}}B{{A}^{-\frac{1}{2}}}\le hI\le I$ for some positive scalars $h$ and $h'$. Then for each $0\le v \le 1$,
\begin{equation}\label{9}
{{m}_{v }}\left( h \right)A{{\sharp}_{v }}B\le A{{\nabla }_{v }}B\le {{M}_{v }}\left( h' \right)A{{\sharp}_{v }}B,
\end{equation}
where \[{{m}_{v }}\left( x \right)\equiv1+\frac{2^v v(1-v)(x-1)^2}{(x+1)^{v+1}}
,\]
and
\[{{M}_{v }}\left( x \right)\equiv1+\frac{v(1-v)(x-1)^2}{2x^{v+1}}.\]
\end{theorem}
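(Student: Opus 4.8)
The plan is to strip \eqref{9} down to two one‑variable inequalities and then lift them back by functional calculus. Set $X:=A^{-\frac12}BA^{-\frac12}$, so that the hypothesis says exactly that the spectrum of $X$ lies in $[h',h]\subseteq(0,1]$. From $A\sharp_{v}B=A^{\frac12}X^{v}A^{\frac12}$ and $A\nabla_{v}B=A^{\frac12}\big((1-v)I+vX\big)A^{\frac12}$, conjugation by $A^{-\frac12}$ turns \eqref{9} into $m_{v}(h)X^{v}\le(1-v)I+vX\le M_{v}(h')X^{v}$, which by the spectral theorem reduces to the scalar statement
\[
m_{v}(h)\,t^{v}\le(1-v)+vt\le M_{v}(h')\,t^{v}\qquad\text{for all }t\in[h',h].
\]

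I would deduce this from the sharper ``self‑improving'' scalar inequalities
\[
m_{v}(t)\,t^{v}\le(1-v)+vt\le M_{v}(t)\,t^{v},\qquad 0<t\le1,\ 0\le v\le1,
\]
whose constants depend on the running point $t$, together with monotonicity of the coefficient functions: a one‑line computation gives, for $0<x<1$,
\[
\frac{d}{dx}\,\frac{(x-1)^{2}}{(x+1)^{v+1}}=\frac{(x-1)\big[(1-v)x+v+3\big]}{(x+1)^{v+2}}<0,\qquad
\frac{d}{dx}\,\frac{(x-1)^{2}}{x^{v+1}}=\frac{(x-1)\big[(1-v)x+v+1\big]}{x^{v+2}}<0,
\]
so $m_{v}$ and $M_{v}$ are both decreasing on $(0,1]$. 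Then $t\le h\le1$ gives $m_{v}(h)\le m_{v}(t)$ and $t\ge h'$ gives $M_{v}(t)\le M_{v}(h')$, and chaining $m_{v}(h)t^{v}\le m_{v}(t)t^{v}\le(1-v)+vt\le M_{v}(t)t^{v}\le M_{v}(h')t^{v}$ delivers the displayed bound on $[h',h]$.

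It remains to prove the self‑improving inequalities, and here the Hermite--Hadamard inequality enters. I would start from the identity
\[
(1-v)+vt-t^{v}=(\log t)^{2}\Big[(1-v)\!\int_{0}^{v}\! s\,t^{s}\,ds+v\!\int_{v}^{1}\!(1-s)\,t^{s}\,ds\Big],
\]
obtained by integrating $\tfrac{d^{2}}{ds^{2}}t^{s}$ against the Green's function of $[0,1]$. Bounding $t^{s}\le1$ on $[0,v]$ and $t^{s}\le t^{v}$ on $[v,1]$, and then using the elementary facts $v+(1-v)t^{v}\le1$ and $t(\log t)^{2}\le(t-1)^{2}$ for $t\in(0,1]$, yields the upper estimate $(1-v)+vt-t^{v}\le\frac{v(1-v)(t-1)^{2}}{2t}$ (a reverse Young inequality of Kittaneh--Manasrah type), which rearranges to $(1-v)+vt\le M_{v}(t)t^{v}$. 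For the lower estimate I would instead apply Hermite--Hadamard to the convex function $s\mapsto t^{s}$ on suitable subintervals determined by $v$ (or in a weighted form); the mean values it produces are precisely what supplies the harmonic‑mean factor hidden in $m_{v}$, since $m_{v}(t)t^{v}=t^{v}+\frac{v(1-v)(t-1)^{2}}{t+1}\big(\frac{2t}{t+1}\big)^{v}$.

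The main obstacle is this last step, the scalar inequality $m_{v}(t)t^{v}\le(1-v)+vt$: the factor $2^{v}/(t+1)^{v+1}$ makes the crude pointwise bounds on $t^{s}$ too lossy — they are enough for the reverse direction but not here — so one must exploit the full two‑sided strength of Hermite--Hadamard, or else carry out a careful sign analysis of $\big((1-v)+vt-t^{v}\big)(t+1)^{v+1}-2^{v}v(1-v)(t-1)^{2}t^{v}$ on $(0,1]$; this is where the bulk of the argument lies. Once Theorem~\ref{b} is in hand, comparing $m_{v}$ and $M_{v}$ with the constants in \eqref{k1}, \eqref{zou} and \eqref{Liao} promised in the abstract is a separate, purely computational matter.
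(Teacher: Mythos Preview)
Your reduction to the scalar problem and the use of monotonicity of $m_v$ and $M_v$ on $(0,1]$ match the paper exactly; the paper also proves monotonicity (its Proposition~2.2) and then applies functional calculus just as you do. Your route to the upper scalar bound $(1-v)+vt\le M_v(t)t^v$ via the Green's function identity and the estimates $v+(1-v)t^v\le1$, $t(\log t)^2\le(t-1)^2$ is different from the paper's but correct.

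The genuine gap is precisely where you say it is: the lower bound $m_v(t)t^v\le(1-v)+vt$. Applying Hermite--Hadamard to the convex function $s\mapsto t^s$ in the \emph{exponent} variable, as you propose, does not naturally produce the factor $2^v/(t+1)^{v+1}$; any partition of $[0,1]$ into subintervals gives you midpoint powers of $t$, not of $(t+1)/2$, and the ``sign analysis'' alternative you mention is not a proof. The paper's device, which you are missing, is to apply Hermite--Hadamard in the \emph{base} variable instead: the function
\[
f_v(t)=\frac{v(1-v)(t-1)}{t^{v+1}}
\]
is concave on $(0,1]$ (the relevant half of Lemma~2.1(ii), since $1\le 1+2/v$), and one checks directly that
\[
\int_x^1 f_v(t)\,dt=1-\frac{(1-v)+vx}{x^{v}}.
\]
Now the concave Hermite--Hadamard inequality on $[x,1]$ reads
\[
\frac{f_v(x)+f_v(1)}{2}\le\frac{1}{1-x}\int_x^1 f_v(t)\,dt\le f_v\!\left(\frac{x+1}{2}\right),
\]
and since $f_v(1)=0$ the two sides give, after multiplying by $1-x>0$ and rearranging, exactly $m_v(x)x^v\le(1-v)+vx\le M_v(x)x^v$ in one stroke. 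The midpoint $\tfrac{x+1}{2}$ is what manufactures the $2^v/(x+1)^{v+1}$ in $m_v$; your approach never sees this midpoint because you integrate in $s$, not in $t$.
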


The proof of Theorem \ref{b} is given in Section \ref{young}, and its advantage for previously known results are also given by Proposition \ref{our_adv_prop} in Section \ref{section3}.

To state our second main result, we recall that the family of Heron mean \cite{01} for two positive numbers $a$ and $b$ is defined as
\[{{F}_{r,v}}\left( a,b \right)\equiv r{{a}^{1-v}}{{b}^{v}}+\left( 1-r \right)\left\{ \left( 1-v \right)a+vb \right\},\qquad\text{ }0\le v\le 1\text{ }and\text{ }r\in \mathbb{R}.\]
More recently the first author \cite{03} showed that if $r\le 1$, then
\begin{equation}\label{1q}
{{\left( \left( 1-v \right){{a}^{-1}}+v{{b}^{-1}} \right)}^{-1}}\le {{F}_{r,v}}\left( a,b \right),\qquad\text{ }0\le v\le 1.
\end{equation}

\begin{theorem}\label{c}
Let $a,b\ge 0$, $r\in \mathbb{R}$ and $0\le v\le 1$. Define
\[{{g}_{r,v}}\left( a,b \right)\equiv v\left( \frac{b-a}{a} \right)\left\{ r{{\left( \frac{a+b}{2a} \right)}^{v-1}}+\left( 1-r \right) \right\}+1,\]
\[{{G}_{r,v}}\left( a,b \right)\equiv \frac{v}{2}\left( \frac{b-a}{a} \right)\left\{ r{{a}^{1-v}}{{b}^{v-1}}+2-r \right\}+1.\]
\begin{itemize}
\item[(1)] If either $a\le b$, $r \geq 0$ or $b\le a$, $r\leq 0$, then
\[{{g}_{r,v}}\left( a,b \right)\le {{F}_{r,v}}\left( a,b \right)\le {{G}_{r,v}}\left( a,b \right).\]
\item[(2)] If either $a\le b$, $r\leq 0$ or $b\le a$, $r\geq 0$, then
\[{{G}_{r,v}}\left( a,b \right)\le {{F}_{r,v}}\left( a,b \right)\le {{g}_{r,v}}\left( a,b \right).\]
\end{itemize}
\end{theorem}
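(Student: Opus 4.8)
\medskip
\noindent\textbf{Proof plan for Theorem \ref{c}.}
The plan is to derive all four inequalities from a single use of the Hermite--Hadamard inequality (as announced in the introduction). First I would normalise: setting $t=b/a$, it is enough to establish the one--variable statements obtained by taking $a=1$ and $b=t>0$. The cases $v\in\{0,1\}$, $r=0$ and $t=1$ are trivial equalities, so assume henceforth $0<v<1$, $r\ne 0$ and $t\ne 1$.

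The mechanism is an algebraic simplification. Subtracting the common affine quantity $v(1-r)(t-1)+1$ from each of $F_{r,v}(1,t)$, $g_{r,v}(1,t)$, $G_{r,v}(1,t)$ gives, by a routine computation,
\begin{align*}
F_{r,v}(1,t)-\bigl(v(1-r)(t-1)+1\bigr)&=r\,(t^{v}-1),\\
g_{r,v}(1,t)-\bigl(v(1-r)(t-1)+1\bigr)&=v\,r\,(t-1)\Bigl(\tfrac{1+t}{2}\Bigr)^{v-1},\\
G_{r,v}(1,t)-\bigl(v(1-r)(t-1)+1\bigr)&=\tfrac{v\,r}{2}\,(t-1)\bigl(t^{v-1}+1\bigr).
\end{align*}
Hence, after dividing by $vr$, the whole theorem reduces to the single two--sided estimate
\[
(t-1)\Bigl(\tfrac{1+t}{2}\Bigr)^{v-1}\ \le\ \frac{t^{v}-1}{v}\ \le\ \frac{t-1}{2}\,\bigl(t^{v-1}+1\bigr)
\qquad(t>1),
\]
with both inequalities reversed for $0<t<1$.

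This last chain is precisely the Hermite--Hadamard inequality applied to $f(s)=s^{v-1}$ on the interval with endpoints $1$ and $t$. Indeed $f$ is convex on $(0,\infty)$ since $f''(s)=(v-1)(v-2)s^{v-3}\ge 0$ for $0\le v\le 1$; by the fundamental theorem of calculus $\int_{1}^{t}s^{v-1}\,ds=\dfrac{t^{v}-1}{v}$; and the midpoint value $f\!\left(\tfrac{1+t}{2}\right)$ and the trapezoidal value $\tfrac12\bigl(f(1)+f(t)\bigr)$ bracket the average $\tfrac1{|t-1|}\int_{1}^{t}f$, with the orientation depending on whether $t>1$ or $t<1$. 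Multiplying back by $vr$ (whose sign is $\operatorname{sgn} r$ since $v>0$) and restoring $v(1-r)(t-1)+1$ then yields the statement: the sign of the product $(t-1)\,r$ is positive in exactly the two sub-cases of item (1) and negative in those of item (2), which is precisely what distinguishes the ordering $g_{r,v}\le F_{r,v}\le G_{r,v}$ from its reverse.

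I do not expect any genuine obstacle: Hermite--Hadamard carries all of the analysis and the fundamental theorem of calculus supplies the integral identity. The only point requiring real care is the bookkeeping — checking the three subtraction identities and then propagating the direction of the inequalities correctly through the two sign factors $\operatorname{sgn}(t-1)$ and $\operatorname{sgn}(r)$, so that items (1) and (2) come out with the right orientation. Concretely I would treat the case $t>1$, $r>0$ in full and deduce the remaining three cases from these two independent reversals.
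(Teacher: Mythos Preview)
Your proposal is correct and follows essentially the same route as the paper: both arguments reduce to the scalar case $a=1$, $b=t$ and apply the Hermite--Hadamard inequality to (an affine image of) $s\mapsto s^{v-1}$ on the interval with endpoints $1$ and $t$. The only cosmetic difference is that the paper builds the integrand $f_{r,v}(t)=rv\,t^{v-1}+(1-r)v$ directly and splits into convex/concave according to the sign of $r$, whereas you first subtract the affine part $v(1-r)(t-1)+1$, apply Hermite--Hadamard to the always-convex $s^{v-1}$, and then recover the four cases by tracking $\operatorname{sgn}(r)$ and $\operatorname{sgn}(t-1)$ when multiplying back by $vr$; these are the same computation.
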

We show the proof of  Theorem \ref{c} along with its advantage by four propositions in Section \ref{section4}.



\section{On Refined Young Inequalities and Reverse Inequalities}\label{young}
\vskip 0.4 true cm
To achieve our results, we need the well-known Hermite-Hadamard inequality which asserts that if $f:\left[ a,b \right]\to \mathbb{R}$ is a convex (concave) function, then the following chain of inequalities hold:
 \begin{equation}\label{1}
 f\left( \frac{a+b}{2} \right)\le \left( \ge  \right)\frac{1}{b-a}\int\limits_{a}^{b}{f\left( x \right)dx}\le \left( \ge  \right)\frac{f\left( a \right)+f\left( b \right)}{2}.
 \end{equation}

\medskip

Our first attempt, which is a direct consequence of \cite[Theorem 1]{MFM2017}, gives an additive-type improvement and reverse for the operator Young inequality via \eqref{1}. 

To obtain inequalities for bounded self-adjoint operators on Hilbert space, we shall use the following monotonicity property for operator functions: If $X\in \mathcal{B}\left( \mathcal{H} \right)$ is a self-adjoint operator with a spectrum $Sp\left( X \right)$ and $f,g$ are continuous real-valued functions on $Sp\left( X \right)$, then
	\[f\left( t \right)\le g\left( t \right),\text{ }t\in Sp\left( X \right)\qquad \Rightarrow \qquad f\left( X \right)\le g\left( X \right).\] 
\vskip 0.3 true cm
The next lemma provides a technical result which we will need in the sequel.
\begin{lemma}\label{2}
Let $0< v \le 1$.
\begin{itemize}
\item[(i)] For each $t>0$, the function ${{f}_{v }}\left( t \right)=v(1- {{t}^{v -1}})$ is concave.
\item[(ii)] The function ${{g}_{v }}\left( t \right)=\frac{v(1-v)(t-1)}{t^{v+1}}$, is concave if $t \leq 1+\frac{2}{v}$, and convex if $t \geq 1+\frac{2}{v}$.
\end{itemize}
\end{lemma}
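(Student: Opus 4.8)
The plan is to prove both parts by a direct second-derivative computation, since each function is smooth on $(0,\infty)$ and, on an interval, concavity (resp. convexity) is equivalent to the second derivative being $\le 0$ (resp. $\ge 0$) there.

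For part (i), I would first rewrite $f_v(t)=v-vt^{v-1}$ and differentiate twice to obtain $f_v''(t)=-v(v-1)(v-2)\,t^{v-3}$. Since $0<v\le 1$, the factor $v$ is positive, $v-1\le 0$, and $v-2<0$, so the product $(v-1)(v-2)$ is nonnegative and hence $v(v-1)(v-2)\ge 0$; therefore $f_v''(t)\le 0$ for all $t>0$, which gives concavity (at $v=1$ the function is identically $0$, which is trivially concave).

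For part (ii), the key algebraic simplification is to write $g_v(t)=v(1-v)\bigl(t^{-v}-t^{-v-1}\bigr)$ before differentiating, which makes the resulting expression factor cleanly. Two differentiations then yield
\[
g_v''(t)=v(1-v)(v+1)\,t^{-v-3}\bigl(v(t-1)-2\bigr).
\]
Since $v(1-v)(v+1)\ge 0$ and $t^{-v-3}>0$, the sign of $g_v''(t)$ equals the sign of $v(t-1)-2$, which is nonpositive exactly when $t\le 1+\frac{2}{v}$ and nonnegative exactly when $t\ge 1+\frac{2}{v}$. This is precisely the claimed dichotomy, with the borderline case $t=1+\frac{2}{v}$ (where $g_v''=0$) and the case $v=1$ (where $g_v\equiv 0$) absorbed automatically.

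I do not expect a genuine obstacle here; the only mild care required is the bookkeeping of the signs of the binomial-type factors under the hypothesis $0<v\le 1$ (in particular that $(v-1)(v-2)\ge 0$ in part (i)) and the choice to expand $\frac{t-1}{t^{v+1}}$ as $t^{-v}-t^{-v-1}$ in part (ii), which is what renders the factorization of $g_v''$ transparent.
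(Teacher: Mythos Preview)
Your proposal is correct and follows essentially the same approach as the paper: both parts are handled by directly computing the second derivative and reading off its sign. Your formulas $f_v''(t)=-v(v-1)(v-2)t^{v-3}$ and $g_v''(t)=v(1-v)(v+1)\,t^{-v-3}\bigl(v(t-1)-2\bigr)$ agree (up to trivial rewriting) with the paper's $f_v''(t)=v(1-v)(v-2)t^{v-3}$ and $g_v''(t)=v(1-v)(v+1)\dfrac{vt-v-2}{t^{v+3}}$, and the sign analysis is identical.
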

\begin{proof}
The function ${{f}_{v }}\left( t \right)$ is twice differentiable and  ${{f}_{v }}''\left( t \right)=v \left(1- v \right)\left( v -2 \right){{t}^{v -3}}$. According to the assumptions $t>0$ and $0\le v \le 1$, so ${{f}_{v }}''\left( t \right)\le 0$.

The function ${{g}_{v }}\left( t \right)$ is also twice differentiable and ${{g}_{v }}''\left( t \right)=v \left( 1- v \right)\left( v +1 \right)\left( \frac{v t-v -2}{{{t}^{v +3}}} \right)$ which implies (ii).
\end{proof}
\vskip 0.3 true cm
Using this lemma, together with \eqref{1}, we have the following proposition.

\begin{proposition}\label{tha}
Let $A,B$ be strictly positive operators such that $A\le B$. Then for each $0\le v \le 1$,
\begin{equation}\label{104}
\begin{aligned}
  & v \left( B-A \right){{A}^{-1}}\left( \frac{A-A{{\natural}_{v -1}}B}{2} \right)+A{{\sharp}_{v}}B \\ 
 & \le A{{\nabla }_{v }}B \\ 
 & \le v \left( B-A \right){{A}^{-1}}\left( A-{{A}^{\frac{1}{2}}}{{\left( \frac{I+{{A}^{-\frac{1}{2}}}B{{A}^{-\frac{1}{2}}}}{2} \right)}^{v -1}}{{A}^{\frac{1}{2}}} \right)+A{{\sharp}_{v }}B. \\ 
\end{aligned}
\end{equation}
\end{proposition}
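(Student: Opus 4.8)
The plan is to apply the Hermite-Hadamard inequality \eqref{1} to the concave function $f_v(t) = v(1 - t^{v-1})$ from Lemma \ref{2}(i) on a suitable interval, and then pass from the resulting scalar inequality to the operator inequality via the monotonicity principle recalled above. The starting point is the observation that the scalar Young gap can be written in integral form: for fixed $t > 0$ with $t \le 1$ (or $t \ge 1$, handled symmetrically), one has
\[
(1-v)\cdot 1 + v t - t^v \;=\; \int_1^t \bigl(v - v s^{v-1}\bigr)\, ds \;=\; \int_1^t f_v(s)\, ds,
\]
since the integrand is the derivative of $(1-v)s + v\cdot\frac{s^v}{?}$... more precisely $\frac{d}{ds}\bigl((1-v)s + vt - $ no — rather $\int_1^t (v - vs^{v-1})\,ds = v(t-1) - (t^v - 1) = (1-v) + vt - t^v$, which is exactly $\nabla_v - \sharp_v$ at the scalar level after normalizing the first argument to $1$.

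Concretely, I would put $x = A^{-1/2}BA^{-1/2}$, so $x \ge I$ is false — wait, the hypothesis $A \le B$ gives $x \ge I$. Then $A\nabla_v B - A\sharp_v B = A^{1/2}\bigl((1-v)I + vx - x^v\bigr)A^{1/2} = A^{1/2}\left(\int_1^x f_v(s)\,ds\right)A^{1/2}$ in the functional-calculus sense. For each eigenvalue $t \ge 1$ of $x$, apply \eqref{1} to the concave $f_v$ on $[1,t]$ (length $t - 1$):
\[
(t-1) f_v\!\left(\tfrac{1+t}{2}\right) \;\ge\; \int_1^t f_v(s)\,ds \;\ge\; (t-1)\,\frac{f_v(1) + f_v(t)}{2} \;=\; (t-1)\,\frac{0 + v(1 - t^{v-1})}{2}.
\]
Writing $f_v\!\left(\tfrac{1+t}{2}\right) = v\bigl(1 - \bigl(\tfrac{1+t}{2}\bigr)^{v-1}\bigr)$ and $\tfrac{v}{2}(1-t^{v-1})$ for the two bounds, multiplying through by $(t-1) \ge 0$, and translating each factor back: $t - 1 \leftrightarrow A^{-1/2}(B-A)A^{-1/2}$, the constant bound $(t-1)\cdot\tfrac{v}{2}(1-t^{v-1}) \leftrightarrow v\,A^{-1/2}(B-A)A^{-1/2}\cdot\tfrac{1}{2}(I - x^{v-1})$, and then conjugating by $A^{1/2}$ on both sides, gives exactly the lower bound $v(B-A)A^{-1}\bigl(\tfrac{A - A\natural_{v-1}B}{2}\bigr) + A\sharp_v B \le A\nabla_v B$ (here $A\natural_{v-1}B = A^{1/2}x^{v-1}A^{1/2}$), and the upper bound with the midpoint term $A^{1/2}\bigl(\tfrac{I+x}{2}\bigr)^{v-1}A^{1/2}$.

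**The main obstacle** is the non-commutativity: the scalar identity multiplies $(t-1)$ against a function of $t$, but when we lift to operators the factors $A^{-1/2}(B-A)A^{-1/2}$ and $(I - x^{v-1})$ (both functions of the single operator $x$) do commute, so the product is well-defined and the functional-calculus monotonicity applies cleanly on the spectrum of $x$; one must simply be careful that the two commuting factors appear in the order written and that conjugation by $A^{1/2}$ (a positive, hence order-preserving congruence) is applied consistently to turn $X \le Y$ into $A^{1/2}XA^{1/2} \le A^{1/2}YA^{1/2}$. A secondary point is verifying the claimed identification with the notation $A\natural_{v-1}B$ and checking that $v - 1 < 0$ causes no domain issue, which is fine since $x > 0$ is invertible. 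The whole argument is, as the text says, "a direct consequence of \cite[Theorem 1]{MFM2017}" — that reference presumably packages precisely this Hermite-Hadamard-to-operator passage, so the proof amounts to identifying the function $f_v$, checking its concavity (already done in Lemma \ref{2}(i)), and reading off the three terms of \eqref{1}.
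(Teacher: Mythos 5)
Your proposal is correct and follows essentially the same route as the paper: apply the Hermite--Hadamard inequality to the concave function $f_v(t)=v(1-t^{v-1})$ on $[1,x]$ (Lemma \ref{2}(i)), use $\int_1^x f_v(t)\,dt=(1-v)+vx-x^v$ to get the scalar inequality \eqref{3}, then lift via functional calculus on $X=A^{-1/2}BA^{-1/2}\ge I$ and conjugate by $A^{1/2}$. The identification of the resulting terms with $A\natural_{v-1}B$ and the midpoint expression matches the paper's computation exactly.
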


\medskip

\begin{proof}
In order to prove \eqref{104}, we firstly prove the corresponding scalar inequalities. As we showed in Lemma \ref{2}(i), the function ${{f}_{v }}\left( t \right)=v(1-  {{t}^{v -1}})$ where $t\ge 1$ and $0\le v \le 1$ is concave. Moreover, we readily check that 
	\[\int\limits_{1}^{x}{{{f}_{v }}}\left( t \right)dt=\left( 1-v  \right)+v x-{{x}^{v }}.\] 
From the  inequality \eqref{1} for concave function we infer that
\begin{equation}\label{3}
v \left( x-1 \right)\left( \frac{1-{{x}^{v -1}}}{2} \right)+{{x}^{v }}\le \left( 1-v  \right)+v x\le v \left( x-1 \right)\left( 1-{{\left( \frac{1+x}{2} \right)}^{v -1}} \right)+{{x}^{v }},
\end{equation}
 where $x\ge 1$ and $0\le v \le 1$.

With $X={{A}^{-\frac{1}{2}}}B{{A}^{-\frac{1}{2}}}$ and thus $Sp\left( X \right)\subseteq \left( 1,+\infty  \right)$, relation \eqref{3} holds for any $x\in Sp\left( X \right)$.  Therefore
\[\begin{aligned}
   v \left( X-I \right)\left( \frac{I-{{X}^{v -1}}}{2} \right)+{{X}^{v }}&\le \left( 1-v  \right)I+v X \\ 
 & \le v \left( X-I \right)\left( I-{{\left( \frac{I+X}{2} \right)}^{v -1}} \right)+{{X}^{v }}.  
\end{aligned}\]
Finally, multiplying both sides by ${{A}^{\frac{1}{2}}}$, we get \eqref{104}.
\end{proof}

\medskip

By virtue of Proposition \ref{tha}, we can improve the first inequality in \eqref{5}.
\begin{remark}\label{12}
It is worth remarking that the left-hand side of inequality \eqref{104},  is a refinement of operator Young inequality in the sense of $v \left( x-1 \right)\left( \frac{1-{{x}^{v -1}}}{2} \right)\ge 0$ for each $x\ge 1$ and $0\le v \le 1$, i.e.,
\begin{equation}\label{hrmonic}
\begin{aligned}
  & A{{\sharp}_{v}}B \\ 
 & \le v\left( B-A \right){{A}^{-1}}\left( \frac{A-A{{\natural}_{v-1}}B}{2} \right)+A{{\sharp}_{v}}B \\ 
 & \le A{{\nabla }_{v}}B. \\ 
\end{aligned}
\end{equation}
Replacing $A$ and $B$ by ${{A}^{-1}}$ and ${{B}^{-1}}$ respectively in \eqref{hrmonic}, we obtain
\begin{equation}\label{011}
\begin{aligned}
  & {{A}^{-1}}{{\sharp}_{v}}{{B}^{-1}} \\ 
 & \le v\left( {{B}^{-1}}-{{A}^{-1}} \right)A\left( \frac{{{A}^{-1}}-{{A}^{-1}}{{\natural}_{v-1}}{{B}^{-1}}}{2} \right)+{{A}^{-1}}{{\sharp}_{v}}{{B}^{-1}} \\ 
 & \le {{A}^{-1}}{{\nabla }_{v}}{{B}^{-1}}. \\ 
\end{aligned}
\end{equation}
Taking inverse in \eqref{011}, we get
\[\begin{aligned}
  & A{{!}_{v }}B \\ 
 & \le {{\left\{ v \left( {{B}^{-1}}-{{A}^{-1}} \right)A\left( \frac{{{A}^{-1}}-{{A}^{-1}}{{\natural}_{v -1}}{{B}^{-1}}}{2} \right)+{{A}^{-1}}{{\sharp}_{v }}{{B}^{-1}} \right\}}^{-1}} \\ 
 & \le A{{\sharp}_{v }}B. \\ 
\end{aligned}\]
\end{remark}

\medskip

In order to give a proof of our first main result, we need the following essential result.  
\begin{proposition}\label{6}
For each $0<x\le 1$ and $0\le v \le 1$, the functions
$m_v(x)$ and $M_v(x)$ defined in Theorem \ref{b} are
 decreasing. Moreover $1\le {{m}_{v}}\left( x \right)\le {{M}_{v}}\left( x \right)$.
\end{proposition}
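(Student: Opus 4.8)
The plan is to handle the three assertions — that $m_v$ is decreasing, that $M_v$ is decreasing, and that $1\le m_v\le M_v$ on $(0,1]$ — by stripping off the constant term $1$ and doing elementary one-variable calculus on what remains. Write $m_v(x)=1+2^v v(1-v)\,\psi(x)$ and $M_v(x)=1+\tfrac12 v(1-v)\,\phi(x)$, where $\phi(x)=(x-1)^2x^{-(v+1)}$ and $\psi(x)=(x-1)^2(x+1)^{-(v+1)}$. Since the coefficients $2^v v(1-v)$ and $\tfrac12 v(1-v)$ are $\ge 0$ for $0\le v\le 1$ (and when $v\in\{0,1\}$ both functions are identically $1$, so there is nothing to prove), it suffices to show that $\phi$ and $\psi$ are decreasing on $(0,1]$, that $\psi\ge 0$, and that the comparison $2^v\psi(x)\le\tfrac12\phi(x)$ holds there.

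For the monotonicity step I would simply differentiate. A short computation yields the factorizations
\[
\phi'(x)=(x-1)\,x^{-(v+2)}\bigl[(1-v)x+(v+1)\bigr],\qquad
\psi'(x)=(x-1)\,(x+1)^{-(v+2)}\bigl[(1-v)x+(v+3)\bigr].
\]
On $(0,1)$ the bracketed factors are strictly positive (each is a non-negative term $(1-v)x$ plus a positive constant), the power factors are positive, and $x-1<0$; hence $\phi'<0$ and $\psi'<0$ there, so $\phi$, $\psi$, and therefore $m_v$ and $M_v$, are decreasing on $(0,1]$ (with derivative vanishing at the endpoint $x=1$).

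For the chain $1\le m_v(x)\le M_v(x)$: the left inequality is immediate from $\psi(x)\ge 0$. For the right one, the factor $v(1-v)(x-1)^2$ is common and non-negative, so if it vanishes the inequality is the trivial $1\le 1$, and otherwise it reduces to $\dfrac{2^v}{(x+1)^{v+1}}\le\dfrac{1}{2x^{v+1}}$, i.e.\ to $(2x)^{v+1}\le(x+1)^{v+1}$; since $t\mapsto t^{v+1}$ is increasing on $(0,\infty)$ this is exactly $2x\le x+1$, which holds because $x\le 1$.

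I do not expect a genuine obstacle here: the argument is short and purely computational. The only points requiring a little care are getting the factorizations of $\phi'$ and $\psi'$ exactly right and separately disposing of the degenerate cases $v\in\{0,1\}$ and the endpoint $x=1$, where the relevant derivatives vanish and ``decreasing'' is to be read in the non-strict sense asserted by the proposition.
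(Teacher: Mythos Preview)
Your argument is correct. The monotonicity claims are handled exactly as in the paper: differentiate and check signs, though your factorisations of $\phi'$ and $\psi'$ are a bit cleaner than the paper's expanded forms of $m_v'$ and $M_v'$.

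The genuine difference is in the inequality $m_v(x)\le M_v(x)$. The paper proves this by setting $\mathfrak{M}_v=M_v-m_v$, computing $\mathfrak{M}_v'$, bounding it from above via $(2x/(x+1))^v\le 1$, and then reducing to showing that a certain cubic $\mathfrak{k}_v(x)=(1-v)x^3+3(v+1)x^2-(v+3)x-(v+1)$ is $\le 0$ on $(0,1]$ by a convexity-plus-endpoint argument; from $\mathfrak{M}_v'\le 0$ and $\mathfrak{M}_v(1)=0$ the conclusion follows. Your route is shorter and more transparent: after stripping the common nonnegative factor $v(1-v)(x-1)^2$, the inequality is exactly $(2x)^{v+1}\le (x+1)^{v+1}$, which is just $2x\le x+1$ by monotonicity of $t\mapsto t^{v+1}$. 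This bypasses the derivative bound and the cubic analysis entirely, at the cost of nothing; it is a strictly simpler proof of the same fact.
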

\begin{proof}
The function ${{m}_{v }}\left( x \right)$ is differentiable and 
\[{{m}_{v }}'\left( x \right)=\frac{v \left( v -1 \right){{2}^{v}}}{{{\left( x+1 \right)}^{v +2}}}\left( \left( v -1 \right){{x}^{2}}+v +3-2\left( v +1 \right)x \right).\]
By assumptions we can find easily that ${{m}_{v }}'\left( x \right)\le 0$, for any $0<x\le 1$ and $0\le v \le 1$. In addition ${{m}_{v }}\left( 1 \right)=1$, so ${{m}_{v }}\left( x \right)\ge 1$.

Similarly the function ${{M}_{v }}\left( x \right)$ is differentiable and 
\[{{M}_{v }}'\left( x \right)=\frac{v \left( v -1 \right)\left( x-1 \right)\left( \left( v -1 \right)x-v -1 \right)}{2{{x}^{v +2}}}.\]
Therefore ${{M}_{v }}'\left( x \right)\le 0$ for any $0<x\le 1$ and $0\le v \le 1$. We also have ${{M}_{v }}\left( 1 \right)=1$, i.e., ${{M}_{v }}\left( x \right)\ge 1$. It remains to prove ${{m}_{v }}\left( x \right)\le {{M}_{v }}\left( x \right)$. Suppose that
\[{{\mathfrak{M}}_{v }}\left( x \right)\equiv{{M}_{v }}\left( x \right)-{{m}_{v }}\left( x \right)\qquad 0<x\le 1\text{ and }0\le v \le 1.\]
In a way similar to what we have done above, we can calculate ${{\mathfrak{M}}'_{v }}(x)$ in the following:
$$
{{\mathfrak{M}}'_{v }}(x) = \frac{v(1-v)(1-x)}{2(x+1)^2x^{v+2}} \mathfrak{h}_v(x),
$$
where
$$
\mathfrak{h}_v(x) \equiv 2x^2\left\{ (1-v)x +v+3 \right\} \left(\frac{2x}{x+1}\right)^v -\left\{ (1-v)x^3+(3-v)x^2+(v+3)x+(v+1)\right\}.
$$
Since $0< x \leq 1$, $\left( \frac{2x}{x+1} \right)^{v} \leq 1$. Thus ${{\mathfrak{M}}'_{v }}(x)$ is bounded from the above:
$$
{{\mathfrak{M}}'_{v }}(x) \leq \frac{v(1-v)(1-x)}{2(x+1)^2x^{v+2}} \mathfrak{k}_v(x),
$$
where
$$
\mathfrak{k}_v(x) \equiv (1-v)x^3 + 3(v+1)x^2-(v+3)x-(v+1).
$$
By elementary calculations, we find that
$$
\mathfrak{k}_v''(x) = 6(1-v)x+6(v+1)\geq 0,\qquad \mathfrak{k}_v(0)=-(v+1) <0, \qquad \mathfrak{k}_v(1)=0.
$$
Thus we have $\mathfrak{k}_v(x) \leq 0$ which implies ${{\mathfrak{M}}'_{v }}(x) \leq 0$ so that $ {{\mathfrak{M}}_{v }}\left( x \right) \geq {{\mathfrak{M}}_{v }}\left( 1 \right)=0$. Therefore, the proposition follows.
\end{proof}

\medskip

We are now in a position to prove Theorem \ref{b} which is a multiplicative type refinement and reverse for the operator Young inequality.

\medskip

\noindent{\it Proof of Theorem \ref{b}.} \,\, It is routine to check that the function ${{f}_{v }}\left( t \right)=\frac{v(1-v)(t-1)}{t^{v +1}}$ where $0<t\le 1$ and $0\le v \le 1$, is concave. We can verify that
\[\int\limits_{x}^{1}{{{f}_{v }}\left( t \right)dt}=1-\frac{\left( 1-v  \right)+v x}{{{x}^{v }}}.\]
Hence from the inequality \eqref{1} we can write
\begin{equation}\label{7}
{{m}_{v }}\left( x \right){{x}^{v }}\le \left( 1-v  \right)+v x\le {{M}_{v }}\left( x \right){{x}^{v }},
\end{equation}
for each $0<x\le 1$ and $0\le v\le 1$.   

Now, we shall use the same procedure as in {{\cite[Theorem 2]{Furuichi3}}}. The inequality \eqref{7} implies that
\[\underset{h'\le x\le h \le1}{\mathop{\min }}\,{{m}_{v }}\left( x \right){{x}^{v }}\le \left( 1-v  \right)+v x\le \underset{h'\le x\le h\le1}{\mathop{\max }}\,{{M}_{v }}\left( x \right){{x}^{v }}.\]
Based on this inequality, one can easily see for which $X$ 
\begin{equation}\label{8}
\underset{h'\le x\le h \le1}{\mathop{\min }}\,{{m}_{v }}\left( x \right){{X}^{v }}\le \left( 1-v  \right)I+v X\le \underset{h'\le x\le h \le1}{\mathop{\max }}\,{{M}_{v }}\left( x \right){{X}^{v }}.
\end{equation}
By substituting ${{A}^{-\frac{1}{2}}}B{{A}^{-\frac{1}{2}}}$ for $X$ and taking into account that ${{m}_{v }}\left( x \right)$ and ${{M}_{v }}\left( x \right)$ are decreasing, the relation \eqref{8} implies
\begin{equation}\label{mv}
{{m}_{v}}\left( h \right){{\left( {{A}^{-\frac{1}{2}}}B{{A}^{-\frac{1}{2}}} \right)}^{v}}\le \left( 1-v \right)I+v{{A}^{-\frac{1}{2}}}B{{A}^{-\frac{1}{2}}}\le {{M}_{v}}\left( h' \right){{\left( {{A}^{-\frac{1}{2}}}B{{A}^{-\frac{1}{2}}} \right)}^{v}}.
\end{equation}
Multiplying ${{A}^{\frac{1}{2}}}$ from the both sides to the inequality \eqref{mv}, we have the inequality \eqref{9}.

\hfill \qed

\begin{remark}\label{rem}
Notice that, the condition $0<h'I\le {{A}^{-\frac{1}{2}}}B{{A}^{-\frac{1}{2}}}\le hI\le I$ in Theorem \ref{b}, can be replaced by $0<\alpha 'I\le B\le \alpha I\le \beta I\le A\le \beta 'I$. In this case we have
	\[{{m}_{v }}\left( h \right)A{{\sharp}_{v }}B\le A{{\nabla }_{v }}B\le {{M}_{v }}\left( h' \right)A{{\sharp}_{v }}B,\] 
where $h=\frac{\alpha }{\beta }$ and $h'=\frac{\alpha '}{\beta '}$.  
\end{remark}

\medskip

It is well-known that for each strictly positive operators $A$ and $B$ (see e.g., \cite[Proposition 3.3.11]{Hiai2010}), 
\begin{equation}\label{heinz}
{{H}_{v}}\left( A,B \right)\le A\nabla B\qquad 0\le v\le 1.
\end{equation}
A counterpart to the inequality \eqref{heinz} is as follows:
\begin{remark}
Assume the conditions of Theorem \ref{b}. Then
\[A\nabla B\le \sqrt{{{M}_{v}}{{\left( h'^2\right)}}}{{H}_{v}}\left( A,B \right).\]
\end{remark}

\medskip

Theorem \ref{b} can be used to infer the following remark:
\begin{remark}
Assume the conditions of Theorem \ref{b}. Then
\[{{m}_{v}}\left( h \right)A{{!}_{v}}B\le A{{\sharp}_{v}}B\le {{M}_{v}}\left( h' \right)A{{!}_{v}}B.\]
\end{remark}

\medskip

The left-hand side of inequality \eqref{9} can be squared by a similar method as in \cite{lin1,lin2}.
\begin{corollary} 
Let $0<\alpha 'I\le B\le \alpha I\le \beta I\le A\le \beta 'I$. Then for every normalized positive linear map $\Phi $,
\begin{equation}\label{a4}
{{\Phi }^{2}}\left( A{{\nabla }_{v}}B \right)\le {{\left( \frac{K\left( h' \right)}{{{m}_{v}}\left( h \right)} \right)}^{2}}{{\Phi }^{2}}\left( A{{\sharp}_{v}}B \right)
\end{equation}
and
\begin{equation}\label{a5}
{{\Phi }^{2}}\left( A{{\nabla }_{v}}B \right)\le {{\left( \frac{K\left( h' \right)}{{{m}_{v}}\left( h \right)} \right)}^{2}}{{\left( \Phi \left( A \right){{\sharp}_{v}}\Phi \left( B \right) \right)}^{2}}
\end{equation}
where $h=\frac{\alpha }{\beta }$ and $h'=\frac{\alpha '}{\beta '}$.
\end{corollary}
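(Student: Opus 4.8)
The plan is to combine the left-hand (refinement) inequality of Theorem \ref{b}, namely $m_v(h)\,A\sharp_v B \le A\nabla_v B$, with the known Kantorovich-type reverse of the operator arithmetic–geometric mean inequality under the sandwich condition $0<\alpha'I\le B\le \alpha I\le \beta I\le A\le \beta'I$, together with the now-standard squaring technique of Lin \cite{lin1,lin2}. First I would recall that Remark \ref{rem} rewrites Theorem \ref{b}'s first inequality in the form $m_v(h)\,A\sharp_v B \le A\nabla_v B$ with $h=\frac{\alpha}{\beta}$, which immediately yields $A\sharp_v B \ge \frac{1}{m_v(h)}\,A\nabla_v B$. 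On the other hand, from \eqref{Liao} (the Liao–Wu–Zhao reverse) applied with the ordering of $A,B$ here we have a reverse of the form $A\nabla_v B \le K^R(h')\,A\sharp_v B$ with $h'=\frac{\alpha'}{\beta'}$; the point of using Lin's method is to upgrade this to a statement about $\Phi$ applied to the two sides and then to square it.

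The key steps, in order, are: (i) note that under $0<\alpha'I\le B\le\alpha I\le\beta I\le A\le\beta'I$ one has $\Phi(A)\sharp_v\Phi(B)\le \Phi(A\nabla_v B)$ — no, more precisely one uses the Ando-type inequality $\Phi(A\sharp_v B)\le \Phi(A)\sharp_v\Phi(B)$ — so that once \eqref{a4} is established, \eqref{a5} follows at once by replacing $\Phi^2(A\sharp_v B)$ with the larger $(\Phi(A)\sharp_v\Phi(B))^2$; (ii) establish the scalar bound governing the constant: since $A\sharp_v B\ge \frac{1}{m_v(h)}A\nabla_v B$ and since a Kantorovich-constant bound controls $\Phi(A\nabla_v B)$ against $\Phi^{-1}(\ldots)$ — here the relevant operator inequality is $\Phi(X)\Phi(X^{-1})\le K(h')I$ type estimates, or rather the squared Lin inequality $\Phi^2(A\nabla_v B)\le K^2(h')\Phi^2(A\sharp_v B)$ valid in the classical case $v=\frac12$; (iii) insert the refinement factor $m_v(h)$ into the denominator, i.e. combine $\Phi^2(A\nabla_v B)\le K^2(h')\Phi^2(A\sharp_v B)$ with $\Phi(A\sharp_v B)\le \frac{1}{m_v(h)}\Phi(A\nabla_v B)$ appropriately — actually the cleanest route is: from $m_v(h)A\sharp_v B\le A\nabla_v B\le K(h')A\sharp_v B$ one gets both a lower and an upper Kantorovich-type sandwich for $A\nabla_v B$ in terms of $A\sharp_v B$ with the refined lower constant $m_v(h)$, and Lin's squaring lemma (which takes $X\le Y\le cX$ with $X,Y>0$ to $\Phi^2(Y)\le \frac{(c+1)^2}{4c}\,\Phi^2(X)$, or the sharper form) applied with $X=A\sharp_v B$, $Y=A\nabla_v B$, and the ratio $c=K(h')/m_v(h)$ bounding $Y$ against $X$; (iv) read off \eqref{a4} and then \eqref{a5}.

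I expect the main obstacle to be step (iii): one must verify that the \emph{ratio} of the two constants, $K(h')/m_v(h)$, is exactly the constant that governs the operator interval containing $A\nabla_v B$ relative to $A\sharp_v B$, so that Lin's squaring inequality applies with precisely that number. This requires knowing that $m_v(h)A\sharp_v B\le A\nabla_v B$ (from Theorem \ref{b}, valid since $A^{-1/2}BA^{-1/2}\le hI\le I$ under the stated sandwich, as in Remark \ref{rem}) and $A\nabla_v B\le K(h')A\sharp_v B$ (the Liao et al. reverse, or the Zou et al./Kantorovich reverse with exponent $R$, but here one wants just $K(h')$, which holds since $R\le 1$ and $K\ge 1$). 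Once both hold, $A\sharp_v B\le A\nabla_v B$ trivially, and one obtains the operator interval $A\sharp_v B \le A\nabla_v B \le \frac{K(h')}{m_v(h)}\,\bigl(m_v(h)A\sharp_v B\bigr)$, i.e. $Y\le \frac{K(h')}{m_v(h)}X$ with $X:=A\sharp_v B\le Y:=A\nabla_v B$; feeding this into the Lin-type squaring result \cite{lin1,lin2} — which states that $X\le Y\le cX$ for strictly positive operators and a normalized positive linear map $\Phi$ implies $\Phi^2(Y)\le c^2\,\Phi^2(X)$ — gives \eqref{a4} immediately with $c=\frac{K(h')}{m_v(h)}$. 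Finally \eqref{a5} follows from \eqref{a4} by the operator inequality $\Phi(A\sharp_v B)\le \Phi(A)\sharp_v\Phi(B)$ together with monotonicity of $t\mapsto t^2$ on positive operators that commute appropriately — or, to avoid commutativity issues, by noting both sides of \eqref{a5} dominate/are dominated in the Löwner order in the same way as in \cite{lin1}. The routine parts are the two scalar constant verifications, which I would not grind through here.
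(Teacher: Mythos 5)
Your overall strategy (combine the refinement $m_v(h)\,A\sharp_v B\le A\nabla_v B$ with a Kantorovich--type reverse and then ``square'' via Lin's technique) points in the right direction, but the step that is supposed to produce \eqref{a4} is not valid. First, from $m_v(h)\,A\sharp_v B\le A\nabla_v B\le K(h')\,A\sharp_v B$ you cannot deduce $A\nabla_v B\le \frac{K(h')}{m_v(h)}\,A\sharp_v B$: already for scalars, $2x\le y\le 4x$ does not give $y\le 2x$. (If instead you meant to take $X:=m_v(h)A\sharp_v B$, the interval ratio is indeed $K(h')/m_v(h)$, but squaring it returns only the constant $K(h')^2$ in front of $\Phi^2(A\sharp_v B)$, so the refinement is lost.) Second, and more seriously, the ``Lin-type squaring result'' you invoke --- that $X\le Y\le cX$ for strictly positive operators implies $\Phi^2(Y)\le c^2\Phi^2(X)$ --- is false in general: with $\Phi=\mathrm{id}$, $Y\le cX$ does not give $Y^2\le c^2X^2$ because $t\mapsto t^2$ is not operator monotone, and the optimal constants $\bigl\|X^{-1/2}YX^{-1/2}\bigr\|$ and $\bigl\|YX^{-1}\bigr\|$ genuinely differ. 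The entire point of \cite{lin1,lin2} is that no such general interval-squaring lemma exists; the squaring must be earned from additional structure.

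The paper's proof supplies exactly that structure. From the two-sided spectral localization one gets $(t-\alpha')(t-\beta')\le 0$, hence $(\alpha'+\beta')I\ge \alpha'\beta'\Phi\left(A^{-1}\nabla_v B^{-1}\right)+\Phi\left(A\nabla_v B\right)$; one then bounds $\bigl\|\Phi\left(A\nabla_v B\right)\alpha'\beta'm_v(h)\Phi^{-1}\left(A\sharp_v B\right)\bigr\|$ by $\frac14\|S+T\|^2$ (Bhatia--Kittaneh \cite{bhatia}), replaces $\Phi^{-1}\left(A\sharp_v B\right)$ by $\Phi\left(A^{-1}\sharp_v B^{-1}\right)$ via Choi's inequality, and absorbs $m_v(h)\Phi\left(A^{-1}\sharp_v B^{-1}\right)\le\Phi\left(A^{-1}\nabla_v B^{-1}\right)$ using Remark \ref{rem}. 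The constant $K(h')=\frac{(\alpha'+\beta')^2}{4\alpha'\beta'}$ thus arises from the quadratic bound on the outer spectral ratio, not from the reverse Young inequality \eqref{Liao}; and the passage to squares is through the equivalence of $\bigl\|\Phi(Y)\Phi^{-1}(X)\bigr\|\le c$ with $\Phi^2(Y)\le c^2\Phi^2(X)$. For the same non-monotonicity reason, \eqref{a5} does not ``follow at once'' from \eqref{a4} via $\Phi\left(A\sharp_v B\right)\le\Phi(A)\sharp_v\Phi(B)$: one must rerun the norm argument with $\Phi(A)\sharp_v\Phi(B)$ inserted at the stage where only first powers appear.
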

\begin{proof}
According to the assumptions
\[\left( \alpha '+\beta ' \right)I\ge \alpha '\beta '{{A}^{-1}}+A,\qquad \left( \alpha '+\beta ' \right)I\ge \alpha '\beta '{{B}^{-1}}+B,\]
since $(t-\alpha')(t-\beta') \leq 0$ for $\alpha' \leq t \leq \beta' $. From these we can write
\begin{equation}\label{a2}
\left( \alpha '+\beta ' \right)I\ge \alpha '\beta '\Phi \left( {{A}^{-1}}{{\nabla }_{v}}{{B}^{-1}} \right)+\Phi \left( A{{\nabla }_{v}}B \right),
\end{equation}
where $\Phi $ is a normalized positive linear map. We have
\[\begin{aligned}
  & \left\| \Phi \left( A{{\nabla }_{v}}B \right)\alpha '\beta '{{m}_{v}}\left( h \right){{\Phi }^{-1}}\left( A{{\sharp}_{v}}B \right) \right\| \\ 
 & \le \frac{1}{4}{{\left\| \Phi \left( A{{\nabla }_{v}}B \right)+\alpha '\beta '{{m}_{v}}\left( h \right){{\Phi }^{-1}}\left( A{{\sharp}_{v}}B \right) \right\|}^{2}} \quad \text{(by \cite{bhatia})}\\ 
 & \le \frac{1}{4}{{\left\| \Phi \left( A{{\nabla }_{v}}B \right)+\alpha '\beta '{{m}_{v}}\left( h \right)\Phi \left( {{A}^{-1}}{{\sharp}_{v}}{{B}^{-1}} \right) \right\|}^{2}} \quad \text{(by Choi's
 inequality {{\cite[p. 41]{bhatia1}}})}\\ 
 & \le \frac{1}{4}{{\left\| \Phi \left( A{{\nabla }_{v}}B \right)+\alpha '\beta '\Phi \left( {{A}^{-1}}{{\nabla }_{v}}{{B}^{-1}} \right) \right\|}^{2}} \quad \text{(by Remark \ref{rem})}\\ 
 & \le \frac{1}{4}{{\left( \alpha '+\beta ' \right)}^{2}} \quad \text{(by \eqref{a2})}.\\ 
\end{aligned}\]
This is the same as saying
\begin{equation}\label{a3}
\left\| \Phi \left( A{{\nabla }_{v}}B \right){{\Phi }^{-1}}\left( A{{\sharp}_{v}}B \right) \right\|\le \frac{K\left( h' \right)}{{{m}_{v}}\left( h \right)},
\end{equation}
where $h=\frac{\alpha }{\beta }$ and $h'=\frac{\alpha '}{\beta '}$. It is not hard to see that \eqref{a3} is equivalent to \eqref{a4}. The proof of the inequality \eqref{a5} goes likewise and we omit the details.
\end{proof}
\begin{remark}
Obviously, the bounds in \eqref{a4} and \eqref{a5} are tighter than those in  {{\cite[Theorem 2.1]{lin2}}}, under the conditions $0<\alpha 'I\le B\le \alpha I\le \beta I\le A\le \beta 'I$ with $h=\frac{\alpha }{\beta }$ and $h'=\frac{\alpha '}{\beta '}$.
\end{remark}
\section{Connection With Known Results}\label{section3}
\vskip 0.4 true cm
In this section, we point out connections between our results given in Section \ref{young} and some inequalities
proved in other contexts. That is, we are now going to explain the advantages of our results.
Let $0\le v \le 1$, $r=\min \left\{ v ,1-v  \right\}$, $R=\max \left\{ v ,1-v  \right\}$ and ${{m}_{v}}\left( \cdot \right)$, ${{M}_{v}}\left( \cdot \right)$ were defined as in Theorem \ref{b}.
As we will show in Appendix A,  the following proposition explains the advantages of our results.
\begin{proposition}\label{our_adv_prop}
The following statements are true.
\begin{itemize}
\item[(I-i)] The lower bound of Proposition \ref{tha} improves the  first inequality in \eqref{k1}, when $\frac{3}{4} \leq v \leq 1$ with $0< A \leq B$. 
\item[(I-ii)] The upper bound of Proposition \ref{tha} improves the  second inequality in \eqref{k1}, when $\frac{2}{3} \leq v \leq 1$  with $0< A \leq B$.
\item[(I-iii)] The upper bound of Proposition \ref{tha} improves the  second inequality in \eqref{k1}, when $0 \leq v \leq \frac{1}{3}$  with $0< A \leq B$.
\item[(II)] The upper bound of Theorem \ref{b} improves the inequality
$$
(1-v)+v x \leq x^v K(x),
$$
when $x^v \geq \frac{1}{2}$.
\item[(III)]
The upper bound of Theorem \ref{b} improves the inequality given by  Dragomir in \cite[Theorem 1]{dragomir},
\begin{equation} \label{ineq_Dragomir_comparison}
\left( 1-v \right)+vx\le \exp \left( 4v\left( 1-v \right)\left( K\left( x \right)-1 \right) \right){{x}^{v}}, \quad x >0
\end{equation}
when $0 \leq v \leq \frac{1}{2}$ and $0< x \leq 1$. 
\item[(IV)] There is no ordering between  Theorem \ref{b} and the inequalities
 (\ref{zou}) and (\ref{Liao}). 
\end{itemize}
\end{proposition}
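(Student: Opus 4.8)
\medskip

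The plan is to reduce each of the six assertions to an elementary inequality between real functions of one real variable. Every operator inequality involved --- Proposition~\ref{tha}, Theorem~\ref{b}, and the competing estimates \eqref{k1}, \eqref{zou}, \eqref{Liao}, together with Dragomir's scalar inequality \eqref{ineq_Dragomir_comparison} --- is obtained by applying the monotonicity principle for operator functions recalled above to a scalar inequality in the variable $x$ ranging over $Sp(A^{-1/2}BA^{-1/2})$. Hence a statement of the form ``bound $\mathcal{B}_1$ improves bound $\mathcal{B}_2$'' follows once the corresponding pointwise scalar inequality is verified on the relevant part of the spectrum, and then conjugated by $A^{1/2}$. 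Normalising $a=1$, $b=x$ turns $a\sharp_v b$, $a\nabla_v b$ and $a+b-2a\sharp b$ into $x^{v}$, $(1-v)+vx$ and $(\sqrt{x}-1)^{2}$ respectively, so under $0<A\le B$ (hence $x\ge1$) or under the hypotheses of Theorem~\ref{b} (hence $0<x\le1$) each item becomes a concrete one-variable inequality.

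The two short parts are (II) and (III). For (II), since $K(x)-1=\frac{(x-1)^{2}}{4x}$ and $M_v(x)-1=\frac{v(1-v)(x-1)^{2}}{2x^{v+1}}$, the desired inequality $M_v(x)\le K(x)$ is equivalent, for $x\neq1$, to $2v(1-v)\le x^{v}$, which holds because $v(1-v)\le\frac14$ and, by assumption, $x^{v}\ge\frac12$. For (III), put $\theta:=4v(1-v)(K(x)-1)=\frac{v(1-v)(x-1)^{2}}{x}\ge0$; the claim reads $M_v(x)\le e^{\theta}$, that is $\frac{\theta}{2x^{v}}\le e^{\theta}-1$. I would first record the elementary bound $e^{\theta}-1\ge\theta\, e^{\theta/2}$ (immediate from $e^{\theta/2}\ge1+\frac{\theta}{2}$ upon integration), which reduces the task to $2x^{v}e^{\theta/2}\ge1$, i.e. to $\ln2+v\ln x+\frac{\theta}{2}\ge0$. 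For fixed $x\in(0,1]$ this last expression is a downward parabola in $v$, so on $[0,\frac12]$ its minimum occurs at $v=0$ (value $\ln2$) or at $v=\frac12$ (value $\frac{(1-x)^{2}}{8x}+\frac12\ln x+\ln2$), and the latter is positive on $(0,1]$, its minimum being attained at $x=\sqrt{5}-2$.

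Parts (I-i)--(I-iii) carry the real weight. After cancelling the common summand $x^{v}$ and substituting $t=\sqrt{x}\ge1$, the cleanest is (I-ii) (where $\max\{v,1-v\}=v$): dividing by $t-1$ it becomes $(\frac{1+t^{2}}{2})^{1-v}\le\frac{1+t}{2}$, which for $1-v\le\frac13$ follows from $(\frac{1+t^{2}}{2})^{1/3}\le\frac{1+t}{2}$, i.e., on cubing, from $(t-1)(t^{2}+3)\ge0$. Parts (I-i) --- equivalent, with $\min\{v,1-v\}=1-v$, to $(1-v)(t-1)\le\frac12 v(t+1)(1-t^{2v-2})$ --- and (I-iii) --- equivalent, with $\max\{v,1-v\}=1-v$, to $v(t+1)(1-(\frac{1+t^{2}}{2})^{v-1})\le(1-v)(t-1)$ --- are to be handled along the same lines, but the power terms $t^{2v-2}$ and $(\frac{1+t^{2}}{2})^{v-1}$ now require sharper control, e.g. Bernoulli's inequality $y^{\alpha}\le1+\alpha(y-1)$ for $0\le\alpha\le1$, its convex counterpart $y^{\beta}\ge1+\beta(y-1)$ for $\beta\le0$, and $\ln t\ge\frac{2(t-1)}{t+1}$, reducing each to a rational inequality whose numerator is then shown to keep a constant sign. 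The thresholds $\frac34,\frac23,\frac13$ are precisely what makes the behaviour near $t=1$ compatible with that for large $t$; since for (I-i) and (I-iii) the binding value of $t$ is interior, the main obstacle I anticipate is carrying out this one-variable optimisation cleanly --- most likely by splitting the $t$-range into a neighbourhood of $1$ and a region $t\ge t_{0}$, treating the endpoints of the $v$-interval separately, and inspecting the relevant derivative where the soft estimates are not sharp enough.

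Finally, (IV) is settled by explicit numerical examples (respecting $h'\le h\le1$ and using $K(x)=K(1/x)$ to align the variables of the two conventions): one exhibits regimes in which the constant furnished by Theorem~\ref{b} is strictly smaller than the corresponding power of the Kantorovich constant in \eqref{zou} or \eqref{Liao}, and other regimes in which it is strictly larger, so that neither estimate dominates the other.
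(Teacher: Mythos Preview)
Your treatment of (II), (I-ii) and (IV) is correct and matches the paper's argument essentially verbatim (the paper also reduces (I-ii) to $(t-1)(t^{2}+3)\ge0$ after the substitution $t=\sqrt{x}$). Your argument for (III) is a clean alternative: the paper uses the cruder $e^{y}\ge1+y+\tfrac{y^{2}}{2}$ and is then forced to analyse the auxiliary function $\mathfrak{g}_{v}(t)=2t-t^{v+1}+v(1-v)(t-1)^{2}$ on $t\ge1$, whereas your bound $e^{\theta}-1\ge\theta e^{\theta/2}$ followed by the concavity-in-$v$ reduction to $\psi(\tfrac12)\ge0$ is shorter and entirely self-contained.

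The genuine gap is in (I-i) and (I-iii). The Bernoulli/tangent bounds you list go the wrong way or are too crude to close the argument. For (I-iii), the convex bound $(\tfrac{1+t^{2}}{2})^{v-1}\ge1+(v-1)\tfrac{t^{2}-1}{2}$ gives $v(t+1)\bigl(1-(\tfrac{1+t^{2}}{2})^{v-1}\bigr)\le\tfrac{v(1-v)(t+1)^{2}(t-1)}{2}$, and comparing with $(1-v)(t-1)$ would require $\tfrac{v(t+1)^{2}}{2}\le1$, which fails for large $t$. For (I-i), the sharpest tangent-type lower bound $1-t^{2v-2}\ge\tfrac{(2-2v)(t-1)}{t}$ (equivalent to Young's inequality) reduces the claim to $v\ge\tfrac{t}{t+1}$, again false as $t\to\infty$. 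So no amount of ``splitting the $t$-range'' will rescue a pure Bernoulli approach here; a different idea is needed.

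The paper's device is to freeze $x$ and vary $v$ instead. For (I-i) one checks that $u_{v}(x):=\tfrac{v(x-1)(1-x^{v-1})}{2}-(1-v)(\sqrt{x}-1)^{2}$ is concave in $v$ on $[\tfrac34,1]$ with $u_{1}(x)=0$, so it suffices to verify $u_{3/4}(x)\ge0$; at $v=\tfrac34$ the expression factors explicitly over $x^{1/4}$. For (I-iii) the paper shows $\frac{d}{dv}w_{v}(x)\le0$ on $[0,\tfrac13]$ via a separate technical lemma (an inequality of the form $(\tfrac{x+1}{2})^{2/3}\ge\tfrac{\sqrt{x}+1}{2\sqrt{x}}(1+\log\tfrac{x+1}{2})$), reducing to the single check $w_{1/3}(x)\ge0$, which again factors. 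The key missing idea in your proposal is precisely this monotonicity/concavity in $v$, which turns a two-parameter problem into a one-parameter endpoint verification where explicit algebra becomes possible.
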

Therefore we conclude that Proposition \ref{tha}  and Theorem \ref{b} are not trivial results. The proofs in the above mentioned are given in Appendix A.

\section{Inequalities Related to Heron Mean}\label{section4}
\vskip 0.4 true cm

This section aims to prove new inequalities containing \eqref{1q}. These inequalities were given in Theorem \ref{c}. Our main idea and technical tool are closely related to the inequalities \eqref{1}. 

\medskip

\noindent{\it Proof of Theorem \ref{c}.}\,\, Consider the function ${{f}_{r,v}}\left( t \right)\equiv rv{{t}^{v-1}}+\left( 1-r \right)v$ where $t>0$, $r\in \mathbb{R}$ and $0\le v\le 1$. Since the function ${{f}_{r,v}}\left( t \right)$ is twice differentiable, one can easily see that 
	\[\frac{d{{f}_{r,v}}\left( t \right)}{dt}=r\left( v-1 \right)v{{t}^{v-2}},\qquad \frac{{{d}^{2}}{{f}_{r,v}}\left( t \right)}{d{{t}^{2}}}=r\left( v-2 \right)\left( v-1 \right)v{{t}^{v-3}}.\] 
 It is not hard to check that 
\[\left\{ \begin{aligned}
  & \frac{{{d}^{2}}{{f}_{r,v}}\left( t \right)}{d{{t}^{2}}}\ge 0\quad \text{ for }r\geq 0 \\ 
 & \frac{{{d}^{2}}{{f}_{r,v}}\left( t \right)}{d{{t}^{2}}}\le 0\quad \text{ for }r\leq 0 \\ 
\end{aligned} \right..\]

Utilizing the inequality \eqref{1} for the function ${{f}_{r,v}}\left( t \right)$ we infer that
\begin{equation}\label{01}
{{g}_{r,v}}\left( x \right)\le r{{x}^{v}}+\left( 1-r \right)\left( \left( 1-v \right)+vx \right)\le {{G}_{r,v}}\left( x \right),
\end{equation}
where
\begin{eqnarray} 
&& {{g}_{r,v}} \left( x \right)\equiv v\left( x-1 \right)\left\{ r{{\left( \frac{1+x}{2} \right)}^{v-1}}+\left( 1-r \right) \right\}+1, \label{grv_def01} \\ 
 && {{G}_{r,v}}\left( x \right)\equiv \frac{v\left( x-1 \right)}{2}\left( r{{x}^{v-1}}+ 2-r \right)+1,  \label{grv_def02}
\end{eqnarray}
for each $x\ge 1$, $r\geq 0$ and $0\le v\le 1$. Similarly for each $0< x\le 1$, $r\geq 0$ and $0\le v\le 1$, we get
\begin{equation}\label{02}
{{G}_{r,v}}\left( x \right)\le r{{x}^{v}}+\left( 1-r \right)\left( \left( 1-v \right)+vx \right)\le {{g}_{r,v}}\left( x \right).
\end{equation}
If $x\ge 1$ and $r\leq 0$, we get
\begin{equation}\label{003}
{{G}_{r,v}}\left( x \right)\le r{{x}^{v}}+\left( 1-r \right)\left( \left( 1-v \right)+vt \right)\le {{g}_{r,v}}\left( x \right),
\end{equation}
for each $0\le v\le 1$. For the case $0< x\le 1$ and $r\leq 0$ we have 
\begin{equation}\label{004}
{{g}_{r,v}}\left( t \right)\le r{{x}^{v}}+\left( 1-r \right)\left( \left( 1-v \right)+vt \right)\le {{G}_{r,v}}\left( x \right),
\end{equation}
for each $0\le v\le 1$.

\hfill \qed

\medskip

Note that we equivalently obtain the operator inequalities from the scalar inequalities given in Theorem \ref{c}. We here omit such expressions  for simplicity.

Closing this section, we prove the ordering $\left\{ (1-v)+vt^{-1}\right\}^{-1} \leq g_{r,v}(t) $ and $\left\{ (1-v)+vt^{-1}\right\}^{-1} \leq G_{r,v}(t)$ under some assumptions, for the purpose to show the advantages of our lower bounds given in Theorem \ref{c}. 
It is known that
\[{{\left\{ \left( 1-v \right)+v{{t}^{-1}} \right\}}^{-1}}\le {{t}^{v}}\qquad 0\le v\le 1\text{ and }t>0,\]
so that we also have interests in the ordering $g_{r,v}(t)$ and $G_{r,v}(t)$ with $t^v$. That is, we can show the following four propositions. The proofs are given in Appendix B.

\begin{proposition} \label{prop01_0330}
For $t \geq 1$ and $0 \leq v,r \leq 1$, we have
\begin{equation} \label{ineq01_0330}
\left\{ (1-v)+vt^{-1}\right\}^{-1} \leq g_{r,v}(t).
\end{equation}
\end{proposition}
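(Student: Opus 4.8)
The plan is to reduce the operator/scalar hybrid statement to a single-variable calculus inequality. Since $g_{r,v}(t)$ as it appears in Proposition~\ref{prop01_0330} is the one-variable version (writing $a=1$, $b=t$ in the definition from Theorem~\ref{c}), the claimed inequality is
\[
\frac{t}{(1-v)t+v}\le v(t-1)\left\{r\left(\tfrac{1+t}{2}\right)^{v-1}+(1-r)\right\}+1,
\]
for $t\ge 1$ and $0\le v,r\le 1$. First I would observe that by the already-established chain \eqref{01} we have $g_{r,v}(t)\le r t^{v}+(1-r)\left((1-v)+vt\right)$ for $t\ge1$, which goes the wrong way; so instead I would exploit the lower part of that same chain only as motivation and work with $g_{r,v}$ directly. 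The natural decomposition is $g_{r,v}(t)=r\,\phi_v(t)+(1-r)\,\psi_v(t)$, where $\phi_v(t)=v(t-1)\left(\tfrac{1+t}{2}\right)^{v-1}+1$ and $\psi_v(t)=v(t-1)+1=(1-v)+vt$. Since $0\le r\le1$, it suffices to prove the two endpoint inequalities $\left\{(1-v)+vt^{-1}\right\}^{-1}\le\phi_v(t)$ and $\left\{(1-v)+vt^{-1}\right\}^{-1}\le\psi_v(t)$ and then take the convex combination.

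The second of these, $\frac{t}{(1-v)t+v}\le (1-v)+vt$, is the classical harmonic-arithmetic mean inequality $a!_v b\le a\nabla_v b$ specialized to $a=1,b=t$, hence immediate from \eqref{5}. For the first, I would set $h(t)=\left((1-v)+vt\right)\phi_v(t)-t$ after clearing the positive denominator $(1-v)+vt^{-1}$ — more precisely, multiply through by $t\left((1-v)+vt\right)>0$ and show
\[
\bigl((1-v)t+v\bigr)\left\{v(t-1)\left(\tfrac{1+t}{2}\right)^{v-1}+1\right\}\ge t.
\]
At $t=1$ both sides equal $1$, so the plan is to show the left side minus the right side is nondecreasing in $t$ on $[1,\infty)$, or equivalently to differentiate once and check the derivative is nonnegative; here I expect to use that $(1-v)t+v\ge t^{1-v}$ (again a Young-type bound with exponents $1-v$ and $v$) together with the fact that the map $t\mapsto v(t-1)\left(\tfrac{1+t}{2}\right)^{v-1}$ dominates $v(t-1)t^{v-1}=v t^v - v t^{v-1}$ for $t\ge1$, since $\left(\tfrac{1+t}{2}\right)^{v-1}\ge t^{v-1}$ when $v-1\le0$ and $\tfrac{1+t}{2}\le t$. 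Assembling these, the left side is at least $t^{1-v}\cdot t^v = t$, which is exactly what is needed; the equality case $t=1$ confirms sharpness.

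The main obstacle I anticipate is making the chain of elementary bounds in the last step genuinely tight enough: replacing $(1-v)t+v$ by $t^{1-v}$ loses something, and replacing $\left(\tfrac{1+t}{2}\right)^{v-1}$ by $t^{v-1}$ also loses something, so I should check that the product of the two weakened factors still clears $t$ without the help of the additive ``$+1$'' inside the brace — if not, I would instead keep the ``$+1$'' and argue $\bigl((1-v)t+v\bigr)\cdot 1 + \bigl((1-v)t+v\bigr)v(t-1)\left(\tfrac{1+t}{2}\right)^{v-1}\ge t$ by noting the first summand already exceeds $t$ whenever $t\le \tfrac{v}{v-(1-v)}$ isn't an issue (it never is, since $(1-v)t+v\ge t$ fails for $t>1$), so the burden really falls on the second summand and the refined Heron-type estimate \eqref{1q} with $r=1$ may be invoked directly to finish. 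In the write-up I would state the two endpoint lemmas, prove each by the monotonicity-from-$t=1$ argument, and conclude by convexity in $r$.
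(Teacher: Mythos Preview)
Your reduction to the two endpoints in $r$ is sound and is exactly how the paper proceeds (the paper phrases it as ``$g_{r,v}$ is decreasing in $r$, so it suffices to treat $r=1$''). The $r=0$ endpoint is indeed the harmonic--arithmetic inequality, as you say. The gap is in your handling of the $r=1$ endpoint.

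After clearing the denominator you need
\[
\bigl((1-v)t+v\bigr)\Bigl\{v(t-1)\Bigl(\tfrac{1+t}{2}\Bigr)^{v-1}+1\Bigr\}\ge t.
\]
You propose to bound both factors from below, replacing $(1-v)t+v$ by $t^{1-v}$ and $\bigl(\tfrac{1+t}{2}\bigr)^{v-1}$ by $t^{v-1}$, and then claim the result is $t^{1-v}\cdot t^{v}=t$. But the second factor becomes $v(t-1)t^{v-1}+1=vt^{v}-vt^{v-1}+1$, not $t^{v}$, so the product is
\[
t^{1-v}\bigl(vt^{v}-vt^{v-1}+1\bigr)=vt-v+t^{1-v},
\]
and by Young's inequality $t^{1-v}\le (1-v)t+v$, hence $vt-v+t^{1-v}\le t$ for $t\ge 1$: your chain lands on the \emph{wrong} side of $t$. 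Your fallback suggestions do not repair this; in particular \eqref{1q} with $r=1$ gives only $t^{v}\ge\{(1-v)+vt^{-1}\}^{-1}$, which is not the inequality you need here.

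The fix is not to weaken $(1-v)t+v$. First subtract: since $t-\bigl((1-v)t+v\bigr)=v(t-1)$, the target is equivalent to
\[
\bigl((1-v)t+v\bigr)\Bigl(\tfrac{1+t}{2}\Bigr)^{v-1}\ge 1,
\]
after dividing by $v(t-1)\ge 0$. Now a single estimate suffices: because $\tfrac{1+t}{2}\le t$ and $v-1\le 0$, one has $\bigl(\tfrac{1+t}{2}\bigr)^{v-1}\ge t^{v-1}$, and then $\bigl((1-v)t+v\bigr)t^{v-1}\ge 1$ is exactly the geometric--harmonic inequality $t^{v}\ge\{(1-v)+vt^{-1}\}^{-1}$. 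This is precisely the paper's argument.
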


\begin{proposition} \label{prop02_0330}
For $0 < t \leq 1$ and $0 \leq v,r \leq 1$, we have
\begin{equation} \label{ineq03_0330}
\left\{ (1-v)+vt^{-1}\right\}^{-1} \leq t^v \leq g_{r,v}(t).
\end{equation}
\end{proposition}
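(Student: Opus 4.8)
The plan is to read off both inequalities from results already available: the left estimate is classical, while the right estimate $t^{v}\le g_{r,v}(t)$ follows by combining the scalar inequality \eqref{02} with the ordinary Young inequality.

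The left inequality $\{(1-v)+vt^{-1}\}^{-1}\le t^{v}$ is valid for every $t>0$ and $0\le v\le 1$; it is the harmonic--geometric mean inequality (the first inequality of \eqref{5} with $A=1$ and $B=t$) and is recalled in the text just above the statement, so there is nothing to do there. Hence it suffices to prove $t^{v}\le g_{r,v}(t)$ for $0<t\le 1$ and $0\le v,r\le 1$.

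For the right inequality I would start from Young's inequality $t^{v}\le (1-v)+vt$, which holds for $t>0$ and $0\le v\le 1$. Since $0\le r\le 1$ forces $1-r\ge 0$, multiplying the Young inequality by $1-r$ and adding $rt^{v}$ to both sides gives
\[
t^{v}=rt^{v}+(1-r)t^{v}\le rt^{v}+(1-r)\big\{(1-v)+vt\big\}.
\]
The right-hand side is precisely the middle term $rx^{v}+(1-r)\left((1-v)+vx\right)$ occurring in \eqref{02} at $x=t$. Because $0<t\le 1$, $r\ge 0$, and $0\le v\le 1$, the second inequality of \eqref{02} yields $rt^{v}+(1-r)\left((1-v)+vt\right)\le g_{r,v}(t)$. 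Chaining the two displays gives $t^{v}\le g_{r,v}(t)$, which together with the left inequality is exactly \eqref{ineq03_0330}.

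There is no genuine obstacle here, since the analytic content is already packaged in \eqref{02} (obtained via Hermite--Hadamard applied to the convex function $f_{r,v}$ in the proof of Theorem \ref{c}); the only things worth checking are that the hypothesis $0\le r\le 1$, not merely $r\ge 0$, is what lets us exhibit $t^{v}$ as a convex combination of $t^{v}$ and $(1-v)+vt$ dominated by the middle term of \eqref{02}, and that $0<t\le 1$ is exactly the range in which the needed direction of \eqref{02} holds. If a self-contained proof were preferred, one could instead apply the lower bound in \eqref{1} directly to $f_{r,v}(t)=rvt^{v-1}+(1-r)v$ on $[t,1]$ (which is convex for $r\ge 0$), use $\int_{t}^{1}f_{r,v}(s)\,ds=r(1-t^{v})+(1-r)v(1-t)$, and close the argument with the elementary inequality $\left(\frac{1+t}{2}\right)^{v-1}\ge 1$ valid for $0<t\le 1$ and $0\le v\le 1$; but invoking \eqref{02} is shorter.
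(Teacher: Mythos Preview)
Your main argument is correct and takes a genuinely different route from the paper. For the right inequality the paper first observes that $g_{r,v}(t)$ is decreasing in $r$ (since for $0<t\le 1$ one has $t-1\le 0$ and $\left(\frac{1+t}{2}\right)^{v-1}\ge 1$), reduces to the extreme case $r=1$, and then proves $g_{1,v}(t)\ge t^{v}$ by applying Hermite--Hadamard directly to the convex function $x^{v-1}$ on $[t,1]$, obtaining $\left(\frac{t+1}{2}\right)^{v-1}\le \frac{1}{1-t}\int_{t}^{1}x^{v-1}\,dx=\frac{1-t^{v}}{v(1-t)}$. You instead keep $r$ general, sandwich $t^{v}$ below the Heron-type quantity $F_{r,v}(1,t)=rt^{v}+(1-r)\{(1-v)+vt\}$ via the scalar Young inequality (this is where $r\le 1$ is used), and then invoke the already-established \eqref{02} to pass to $g_{r,v}(t)$. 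Your route is shorter given that \eqref{02} is available from the proof of Theorem~\ref{c}; the paper's route is more self-contained and shows that only the boundary case $r=1$ carries content.

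One small caveat: your closing parenthetical ``self-contained'' sketch does not quite work as written. Applying the midpoint Hermite--Hadamard bound to $f_{r,v}$ on $[t,1]$ simply reproduces the inequality $F_{r,v}(1,t)\le g_{r,v}(t)$, i.e.\ \eqref{02} again, so you would still need Young's inequality afterward; and the suggested use of $\left(\frac{1+t}{2}\right)^{v-1}\ge 1$ goes the wrong way (it yields $g_{r,v}(t)\le (1-v)+vt$, an upper bound, not the desired lower bound $t^{v}\le g_{r,v}(t)$). This does not affect your main argument, which is complete as it stands.
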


\begin{proposition} \label{prop05_0330}
For $0 \leq r,v \leq 1$ and $c \leq t \leq 1$ with $c \equiv \frac{2^7-1}{5^4}$, we have
\begin{equation} \label{ineq07_0330}
\left\{ (1-v)+vt^{-1}\right\}^{-1} \leq G_{r,v}(t).
\end{equation}
\end{proposition}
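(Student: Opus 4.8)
The plan is to make two reductions and then settle the remaining one–variable inequality by a concavity argument. First I would observe that, writing $G_{r,v}(t)=\tfrac{v(t-1)}{2}(r(t^{v-1}-1)+2)+1$, the map $r\mapsto G_{r,v}(t)$ is affine with slope $\tfrac12 v(t-1)(t^{v-1}-1)$, which is $\le0$ for $0<t\le1$ and $0\le v\le1$ (since $t-1\le0$ and $t^{v-1}\ge1$). Hence $G_{r,v}(t)\ge G_{1,v}(t)$ for every $r\in[0,1]$, so it suffices to prove \eqref{ineq07_0330} in the case $r=1$.

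Next I would rewrite the target inequality at $r=1$. Since $\{(1-v)+vt^{-1}\}^{-1}=\tfrac{t}{v+(1-v)t}$, one gets $\{(1-v)+vt^{-1}\}^{-1}-1=\tfrac{v(t-1)}{v+(1-v)t}$ and $G_{1,v}(t)-1=\tfrac{v(t-1)}{2}(t^{v-1}+1)$. When $v=0$ or $t=1$ these are equal; otherwise $v(t-1)<0$, and dividing by it reverses the inequality, so that \eqref{ineq07_0330} (for $r=1$) becomes the scalar statement
\[
(v+(1-v)t)(1+t^{v-1})\le2,\qquad 0\le v\le1,\ c\le t\le1.
\]

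To prove this I would fix $t\in[c,1]$ and put $w=1-v\in[0,1]$, $p=1-t\ge0$, $b=-\ln t\ge0$, so that the left–hand side equals $F(w):=(1-pw)(1+e^{bw})$. A short computation yields $F(0)=2$, $F'(0)=b-2p$, and $F''(w)=b\,e^{bw}((b-2p)-pbw)$. The crux is the estimate $b-2p\le0$ on $[c,1]$, i.e. $\ln t\ge2(t-1)$: I would analyse $h(t)=\ln t-2(t-1)$, note $h(1)=0$ and that $h$ increases on $(0,\tfrac12)$ and decreases on $(\tfrac12,1)$, so on $[c,1]$ its minimum is $\min\{h(c),h(1)\}=\min\{h(c),0\}$, and then verify $h(c)\ge0$, which amounts to $625/127\le e^{996/625}$. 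Once $b-2p\le0$ is known, both summands in $(b-2p)-pbw$ are $\le0$, so $F''\le0$ on $[0,1]$; hence $F'$ is non-increasing and $F'(0)=b-2p\le0$, so $F'\le0$, $F$ is non-increasing, and $F(w)\le F(0)=2$. Unwinding the two reductions then yields \eqref{ineq07_0330}.

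The main obstacle is the numerical estimate $h(c)\ge0$, equivalently $625/127\le e^{996/625}$. The genuine threshold here is the root $t^{*}\in(0,1)$ of $\ln t=2(t-1)$, which is transcendental and lies only about $10^{-5}$ below $c=\tfrac{127}{625}$; this razor–thin margin is precisely why that particular rational is chosen, and it forces the estimate to be carried out carefully — for instance by bounding $e^{996/625}$ from below with a partial sum of its positive–term Taylor expansion and comparing with the rational $625/127$ — rather than by a soft comparison.
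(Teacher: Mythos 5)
Your proof is correct and follows essentially the same route as the paper's: reduce to $r=1$ by monotonicity in $r$, rewrite the claim as $(v+(1-v)t)(1+t^{v-1})\le 2$, and establish it by showing the function is non-increasing away from the equality case, with everything hinging on $2(t-1)\le \ln t$ for $c\le t\le 1$ (your $(b-2p)-pbw\le 0$ is literally the paper's $\mathsf{l}_v(t)\le 0$, and your $b-2p\le 0$ is its $\mathsf{h}(t)\le 0$). Your substitution $w=1-v$, $p=1-t$, $b=-\ln t$ merely makes explicit the "some calculations" the paper leaves unstated, and you rightly flag that the endpoint check $h(c)\ge 0$ has only about a $3.5\times 10^{-5}$ margin and must be verified carefully.
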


\begin{proposition} \label{prop06_0330}
For $0 \leq v \leq 1$, $r \leq 1$ and $t \geq 1$, we have
\begin{equation} \label{ineq08_0330}
\left\{ (1-v)+vt^{-1}\right\}^{-1} \leq t^v \leq G_{r,v}(t).
\end{equation}
\end{proposition}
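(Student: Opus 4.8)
The first inequality in \eqref{ineq08_0330} is exactly the cited harmonic--geometric mean inequality $\{(1-v)+vt^{-1}\}^{-1}\le t^v$, valid for all $t>0$ and $0\le v\le 1$, so only the bound $t^v\le G_{r,v}(t)$ needs an argument. First I would expand the definition of $G_{r,v}$ and group its terms according to powers of $r$, which yields the convenient identity $G_{r,v}(t)=(1-v)+vt+\tfrac{r}{2}\,v(t-1)(t^{v-1}-1)$. Set $P(t):=v(t-1)(t^{v-1}-1)$. For $t\ge 1$ and $0\le v\le 1$ one has $t-1\ge 0$ and $t^{v-1}-1\le 0$, so $P(t)\le 0$; moreover Young's inequality gives $(1-v)+vt\ge t^v$.

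Next I would split on the sign of $r$. If $r\le 0$ then $\tfrac{r}{2}P(t)\ge 0$, hence $G_{r,v}(t)\ge (1-v)+vt\ge t^v$ and we are done. If $0\le r\le 1$, then since $P(t)\le 0$ the quantity $\tfrac{r}{2}P(t)$ is nonincreasing in $r$ on $[0,1]$, so $G_{r,v}(t)\ge G_{1,v}(t)$; thus it suffices to prove $t^v\le G_{1,v}(t)$. (Alternatively, for $0\le r\le 1$ this reduction can be bypassed: inequality \eqref{01} gives $rt^v+(1-r)\bigl((1-v)+vt\bigr)\le G_{r,v}(t)$, and bounding $(1-r)\bigl((1-v)+vt\bigr)\ge (1-r)t^v$ by Young yields $t^v\le G_{r,v}(t)$ directly.)

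It remains to establish $t^v\le G_{1,v}(t)$ for $t\ge 1$, $0\le v\le 1$. I would introduce $\phi(t):=2\bigl(G_{1,v}(t)-t^v\bigr)$, which after simplification equals $(2-v)+vt-(2-v)t^v-vt^{v-1}$. Then $\phi(1)=0$, a short computation gives $\phi'(1)=0$, and $\phi''(t)=v(1-v)(2-v)t^{v-3}(t-1)\ge 0$ for all $t\ge 1$. Hence $\phi$ is convex on $[1,\infty)$ with a critical point at $t=1$ where it vanishes, so $\phi(t)\ge 0$ there, i.e. $G_{1,v}(t)\ge t^v$. Combining with the case $r\le 0$ finishes the proof. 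The only mildly delicate points are the reduction of the general case $r\le 1$ to the extreme value $r=1$ via the sign of $P(t)$, and the sign bookkeeping in $\phi''$; both are elementary, so I do not expect a serious obstacle.
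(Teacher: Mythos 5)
Your proof is correct and follows essentially the same route as the paper: reduce to $r=1$ using that $G_{r,v}(t)$ is decreasing in $r$ for $t\ge 1$ (your sign analysis of $P(t)$ is exactly this monotonicity), then verify $G_{1,v}(t)\ge t^v$; your $\phi$ is precisely the paper's auxiliary function $\mathsf{k}_v$. The only difference is that you supply the convexity/second-derivative details where the paper merely writes that ``some calculations'' give $\mathsf{k}_v(t)\ge \mathsf{k}_v(1)=0$.
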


\begin{remark}
Propositions \ref{prop01_0330}-\ref{prop06_0330} show that lower bounds given in Theorem \ref{c} are tighter than the known bound (Harmonic mean), for the cases given in  Propositions \ref{prop01_0330}-\ref{prop06_0330}. 
If $r=1$ in Proposition \ref{prop01_0330}, then $g_{r,v}(t) \leq t^v$, for $t \geq 1$ and $0 \leq v \leq 1$. 
If $r=1$ in Proposition \ref{prop05_0330}, then $G_{r,v}(t) \leq t^v$, for $c \leq t \leq 1$ and $0 \leq v \leq 1$. 
We thus find that
Proposition \ref{prop01_0330} and Proposition \ref{prop05_0330} make  sense for the purpose of finding the functions between $\left\{ (1-v)+vt^{-1}\right\}^{-1} $ and $t^v$.
\end{remark}
\begin{remark}
In the process of the proof in Proposition \ref{prop05_0330} we find the inequality:
$$
\frac{t^v+t}{2} \leq \left\{ (1-v)+vt^{-1}\right\}^{-1} ,
$$
for $0 \leq v \leq 1$ and $c \leq t \leq 1$.
Then we have the following inequalities:
$$
\frac{A \sharp_v B +B}{2} \leq A!_v B \leq A \sharp_v B,
$$
for $0< c A \leq B \leq A$ with $c = \frac{2^7 -1}{5^4}$, and  $0\leq v \leq 1$.

In the process of the proof in Proposition \ref{prop02_0330} we also find the inequality:
$$
t \left( \frac{t+1}{2} \right)^{v-1} \leq \left\{ (1-v)+vt^{-1}\right\}^{-1}, 
$$
for $0 \leq v \leq 1$ and $0 \leq t \leq 1$.
Then we have the following inequalities:
$$
BA^{-1/2} \left(\frac{A^{-1/2}BA^{-1/2} +I}{2} \right)^{v-1} A^{1/2} \leq  A!_v B \leq A \sharp_v B,
$$
for $0<  B \leq A$ and $0\leq v \leq 1$.
\end{remark}
\section*{Concluding Remark}
\vskip 0.4 true cm
Several refinements and generalizations of the inequality \eqref{1} have been given (see, e.g. \cite{1,3,2,4}). Of course, if we apply them with similar considerations were discussed above, we can find new results concerning mean inequalities. We leave the details of this idea to the interested reader, as it is just an application of our main results.

\vskip 0.4 true cm
\noindent{\bf Acknowledgment.} The authors thank anonymous referees for giving valuable comments and suggestions to improve our manuscript. The author (S.F.) was partially supported by JSPS KAKENHI Grant Number
16K05257 

\vskip 0.4 true cm

\bibliographystyle{alpha}

\vskip 0.4 true cm

\section*{Appendix A}
\vskip 0.4 true cm
For the purpose to give proof of Proposition \ref{our_adv_prop},  we need the following lemma.
\begin{lemma} \label{lemma_0326}
For each $x \geq 1$, we have
\begin{equation} \label{ineq04_0326}
\left( \frac{x+1}{2} \right)^{2/3} \geq \left( \frac{\sqrt{x} +1}{2 \sqrt{x}}\right) \left(1+\log\left( \frac{x+1}{2}\right) \right).
\end{equation}
\end{lemma}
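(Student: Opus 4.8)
The plan is to convert the claim into a one–variable inequality via the substitution $t=\sqrt{x}$ (so $t\ge1$) and then settle it with a single derivative test. After this substitution, and after multiplying through by the positive factor $\frac{2t}{t+1}$, the assertion becomes $P(t)\ge0$ on $[1,\infty)$, where
\[
P(t)\equiv\frac{2t}{t+1}\Bigl(\frac{t^{2}+1}{2}\Bigr)^{2/3}-1-\log\frac{t^{2}+1}{2}.
\]
Since $P(1)=0$ — the original inequality is an \emph{equality} at $x=1$ — it suffices to prove $P'(t)\ge0$ for all $t\ge1$.

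To this end I would compute
\[
P'(t)=\frac{2}{(t+1)^{2}}\Bigl(\frac{t^{2}+1}{2}\Bigr)^{2/3}+\frac{4t^{2}}{3(t+1)}\Bigl(\frac{t^{2}+1}{2}\Bigr)^{-1/3}-\frac{2t}{t^{2}+1},
\]
and then write $w\equiv\bigl(\frac{t^{2}+1}{2}\bigr)^{1/3}$, so that $w^{3}=\frac{t^{2}+1}{2}$ and $w\ge1$ for $t\ge1$. Multiplying $P'(t)$ by the positive quantity $3(t+1)^{2}w^{3}$ turns $P'(t)\ge0$ into the algebraic inequality
\[
6w^{5}+4t^{2}(t+1)w^{2}\ \ge\ 3t(t+1)^{2}.
\]
Now AM–GM gives $w^{3}=\frac{t^{2}+1}{2}\ge t$, hence $w\ge t^{1/3}$; replacing $w$ by $t^{1/3}$ on the left (which only decreases it) it remains to check $6t^{5/3}+4t^{8/3}(t+1)\ge3t(t+1)^{2}$. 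Dividing by $t$ and putting $s\equiv t^{1/3}\ge1$, this is precisely
\[
g(s)\equiv4s^{8}-3s^{6}+4s^{5}-6s^{3}+6s^{2}-3\ \ge\ 0 .
\]

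Finally I would dispatch $g(s)\ge0$ on $[1,\infty)$ from $g(1)=2>0$ together with
\[
g'(s)=s\bigl(32s^{6}-18s^{4}+20s^{3}-18s+12\bigr)\ \ge\ 0\qquad(s\ge1),
\]
which is immediate by grouping, since for $s\ge1$ one has $32s^{6}-18s^{4}\ge14s^{4}\ge0$, $20s^{3}-18s\ge2s\ge0$, and $12>0$. This gives $P'\ge0$, hence $P\ge0$, which is the lemma. The step I expect to require the most care is keeping the estimates tight near $t=1$: because the inequality is an equality at $x=1$, a crude bound on $\log$ is not permissible, so it is essential that the passage $w\mapsto t^{1/3}$ is exact at $t=1$ and that the residual polynomial $g$ still carries a strictly positive value there; the only genuine computation is the sign of $g$ (equivalently of $g'$), which the grouping above resolves.
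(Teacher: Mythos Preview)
Your proof is correct. The computations check: $P(1)=0$, the derivative formula is right, the reduction to $6w^{5}+4t^{2}(t+1)w^{2}\ge 3t(t+1)^{2}$ via multiplication by $3(t+1)^{2}w^{3}$ is accurate, the AM--GM step $w\ge t^{1/3}$ is an equality at $t=1$ (so nothing is lost near the critical point), and the final polynomial $g(s)=4s^{8}-3s^{6}+4s^{5}-6s^{3}+6s^{2}-3$ satisfies $g(1)=2$ with $g'\ge 0$ on $[1,\infty)$ by your grouping.

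Your route differs from the paper's. The paper first proves the \emph{stronger} bound
\[
\Bigl(\tfrac{x+1}{2}\Bigr)^{2/3}\ \ge\ \Bigl(\tfrac{1}{2}+\tfrac{x+1}{4x}\Bigr)\Bigl(1+\log\tfrac{x+1}{2}\Bigr),
\]
via the substitution $s=\bigl(\tfrac{x+1}{2}\bigr)^{1/3}$, which reduces to showing a function $\mathfrak{F}(s)=4s^{5}-3s^{3}-2s^{2}+1-9s^{3}\log s+3\log s$ is nonnegative on $[1,\infty)$; it then passes to the stated lemma by the elementary inequality $\tfrac{1}{2}+\tfrac{x+1}{4x}\ge \tfrac{\sqrt{x}+1}{2\sqrt{x}}$. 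Your approach instead substitutes $t=\sqrt{x}$, differentiates once, and uses AM--GM ($w\ge t^{1/3}$) to eliminate the cube root, landing on a purely polynomial inequality with no logarithm in sight. What the paper's detour buys is a genuinely sharper intermediate statement; what your argument buys is a log-free endgame with an explicit, fully verified polynomial check, whereas the paper leaves the sign of $\mathfrak{F}$ to unspecified ``simple calculations.''
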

\begin{proof}
We firstly prove
\begin{equation} \label{ineq05_0326}
\left( \frac{x+1}{2} \right)^{2/3} \geq \left( \frac{1}{2} +\frac{x+1}{4x}\right) \left(1+\log\left( \frac{x+1}{2}\right) \right),
\end{equation}
for $x \geq 1$. Putting $t = \frac{x+1}{2} \geq 1$, the inequality (\ref{ineq05_0326}) is equivalent to the inequality
$$
t^{2/3} \geq \frac{(3t-1)}{2(2t-1)} (1+ \log t),
$$
which is equivalent to saying
$$
2s^2(2s^3-1) \geq (3s^3 -1) (1+3 \log s),
$$
where $s =t^{1/3} \geq 1$. To prove the above inequality, we set
$$
\mathfrak{F}(s) \equiv 4s^5-3s^3-2s^2+1-9s^3\log s +3 \log s\qquad s \geq 1.
$$ 
By simple calculations, we have
 $\mathfrak{F}(s) \geq \mathfrak{F}(1) =0$. Hence we have the inequality (\ref{ineq05_0326}).
For any $a>0$, we have $\frac{2a}{1+a} \leq \sqrt{a}$, that is, $\frac{a +1}{2a} \geq \frac{1}{\sqrt{a}}$. Therefore for any $a > 0$, we have $\frac{1}{2}+\frac{a+1}{4a}  \geq \frac{1}{2} +\frac{1}{2\sqrt{a}} = \frac{\sqrt{a} +1 }{2\sqrt{a}}$, which implies the following second inequality
$$
\left( \frac{x+1}{2} \right)^{2/3} \geq \left( \frac{1}{2} +\frac{x+1}{4x}\right) \left(1+\log\left( \frac{x+1}{2}\right) \right)\geq \left( \frac{\sqrt{x} +1}{2 \sqrt{x}}\right) \left(1+\log\left( \frac{x+1}{2}\right) \right).
$$
This completes the proof.
\end{proof}

\medskip

\noindent{\it Proof of Proposition \ref{our_adv_prop}.}
\begin{itemize}
\item[(I)]  Assume that $x\ge 1$.
\begin{itemize}
\item[(i)] Consider the function 
\[{{u}_{v }}\left( x \right)\equiv v \left( x-1 \right)\left( \frac{1-{{x}^{v -1}}}{2} \right)-r{{\left( 1-\sqrt{x} \right)}^{2}}.\]
For $\frac{3}{4} \leq v \leq 1$, we have ${{u}_{v}}\left( x \right)\ge 0$. Let us prove this statement. Since $u_1(x) =0$ and
$
\frac{d^2u_v(x)}{dv^2} = \frac{1}{2}(1-x)x^{v-1} \left\{ 2\log x+v (\log x)^2 \right\} \leq 0
$
for $x \geq 1$, we have only to prove $u_{3/4}(x) \geq 0$ for $x \geq 1$.
Since $u_{3/4}(x) = \frac{x^{5/4}-3x+4x^{3/4}-5x^{1/4}+3}{8x^{1/4}}$, we set the function $\mathfrak{v}(x) \equiv x^{5/4}-3x+4x^{3/4}-5x^{1/4}+3$. Some calculations show $\mathfrak{v}(x) \geq \mathfrak{v}(x) =0$ which implies $u_{3/4} (x) \geq 0$. Hence our claim follows.

In this case, the first inequality in \eqref{104}, can be considered as a refinement of the first inequality in \eqref{k1}.
\item[(ii)] Consider the function 
\[{{w}_{v }}\left( x \right)\equiv R{{\left( 1-\sqrt{x} \right)}^{2}}-v \left( x-1 \right)\left( 1-{{\left( \frac{x+1}{2} \right)}^{v -1}} \right).\] 
For $\frac{2}{3} \leq v \leq 1$, we have ${{w}_{v}}\left( x \right)\ge 0$. For proving this inequality, let $\mathfrak{x}_v(x) =(1-\sqrt{x})^2 -(x-1)\left(1-\left(\frac{x+1}{2}\right)^{v-1} \right)$. For $x \geq 1$, we then have
$\frac{d\mathfrak{x}_v(x)}{dv} = (x-1)\left( \frac{x+1}{2}\right)^{v-1} \left\{ \log \left( \frac{x+1}{2}\right)  \right\} \geq 0$.
We have only to prove $\mathfrak{x}_{2/3}(x) \geq 0$ for $x \geq 1$. By slightly complicated calculations, we have
$$
\mathfrak{x}_{2/3}(x) = \frac{2^{4/3} (\sqrt{x} -1)}{(x+1)^{1/3}} \left\{ \frac{\sqrt{x} +1}{2} -\left( \frac{x+1}{2}\right)^{1/3}\right\} \geq 0.
$$
Indeed, for $t \geq 1$, we have $(t-1)(t^2+3) \geq 0$ which is equivalent to
$(t+1)^3 \geq 4(t^2+1)$. Putting $t = \sqrt{x}$, we obtain $\frac{(\sqrt{x} +1)^3}{8} \geq \frac{x+1}{2}$ which shows $\frac{\sqrt{x}+1}{2} \geq \left(\frac{x+1}{2}\right)^{1/3}$. Thus our assertion follows.
\item[(iii)]
In addition, for $0 \leq v \leq \frac{1}{3}$, we have ${{w}_{v}}\left( x \right)\ge 0$. In fact, since  $v \left(\frac{x+1}{2} \right)^{v-1}$ is increasing for $v$, we estimate the first derivative of $w_v(x)$ as
\[\begin{aligned}
  & \frac{d{{w}_{v}}\left( x \right)}{dv}=-{{\left( \sqrt{x}-1 \right)}^{2}}-\left( x-1 \right)+\left( x-1 \right){{\left( \frac{x+1}{2} \right)}^{v-1}}\left( 1+v\log \left( \frac{x+1}{2} \right) \right) \\ 
 & \le -{{\left( \sqrt{x}-1 \right)}^{2}}-\left( x-1 \right)+\left( x-1 \right){{\left( \frac{x+1}{2} \right)}^{-{}^{2}/{}_{3}}}\left( 1+\frac{1}{3}\log \left( \frac{x+1}{2} \right) \right) \\ 
 & =-\frac{{{2}^{{}^{5}/{}_{3}}}\sqrt{x}\left( \sqrt{x}-1 \right)}{{{\left( x+1 \right)}^{{}^{2}/{}_{3}}}}\left\{ {{\left( \frac{x+1}{2} \right)}^{{}^{2}/{}_{3}}}-\frac{\sqrt{x}+1}{2\sqrt{x}}\left( 1+\log \left( \frac{x+1}{2} \right) \right) \right\} \\ 
 & \le 0. \\ 
\end{aligned}\]
The last inequality is due to Lemma \ref{lemma_0326}.
Consequently, $w_v(x) \geq w_{1/3}(x)$. So we prove $w_{1/3}(x) \geq 0$.
After short computations, we get
$$
w_{1/3}(x) = \frac{\sqrt{x} -1}{3}\left( \frac{x+1}{2}\right)^{-2/3} \left\{ (\sqrt{x} -3)\left( \frac{x+1}{2}\right)^{2/3} +\sqrt{x} +1 \right\}.
$$
Now we set the function $\mathfrak{y}(t) \equiv (t -3)\left( \frac{t^2+1}{2}\right)^{2/3} +t +1$ for $t \geq 1$.  By some calculations, we get $\mathfrak{y}(t) \geq \mathfrak{y}(1) =0$.
Therefore we have $w_v(t) \geq w_{1/3} (t) \geq 0$, as required.
\end{itemize}
In this cases, the second inequality in \eqref{104} provides an improvement for the second inequality in \eqref{k1} \footnote{It is interesting to note that, by the computer calculations, we find that if $v \ge 0.7$ then ${{u}_{v }}\left( x \right)\ge 0$ and if  $v \ge 0.6$ or $v \le 0.4$ we have ${{w}_{v }}\left( x \right)\ge 0$. These mean we have a possibility to extend the range of $v$ to satisfy the condition of (I-i), (I-ii) and (I-iii) in Proposition \ref{our_adv_prop}.}.
\item[(II)]  Let $x>0$.
It is clear that if ${{x}^{v}}\ge \frac{1}{2}$, then ${{M}_{v}}\left( x \right)\le K\left( x \right)$.
Indeed, by simple calculations, the inequality $M_v(x) \leq K(x)$ is equivalent to the inequality $2v(1-v) \leq x^v$. Since $v(1-v) \leq \frac{1}{4}$, we have $x^v \geq \frac{1}{2} \geq 2v(1-v)$ under the condition $x^v \geq \frac{1}{2}$.
\item[(III)] Dragomir obtained the inequality (\ref{ineq_Dragomir_comparison}) in \cite[Theorem 1]{dragomir} for $x>0$. However, for $0 \leq v \leq \frac{1}{2}$ and $0 < x \leq 1$, we show
\begin{equation} \label{ineq01_question0323}
M_v(x) \leq \exp \left( 4v(1-v) \left( K(x) -1 \right) \right).
\end{equation}
Our upper bound of Theorem \ref{b} is tighter than one given in {{\cite[Theorem 1]{dragomir}}}, when $0 \leq v \leq \frac{1}{2}$.

Let us prove the above inequality (\ref{ineq01_question0323}) which is equivalent to the inequality
$$
1+ \frac{1}{2x^v} \frac{v(1-v)(x-1)^2}{x} \leq \exp \left( \frac{v(1-v)(x-1)^2}{x} \right).
$$
We use the inequality
\[\exp \left( y \right)\ge 1+y+\frac{1}{2}{{y}^{2}},\qquad  y\ge 0, \]
with $y = \frac{v(1-v)(x-1)^2}{x} \geq 0$.
Then we calculate
\begin{eqnarray}
  && \exp \left( \frac{v\left( 1-v \right){{\left( x-1 \right)}^{2}}}{x} \right)-1-\frac{1}{2{{x}^{v}}}\frac{v\left( 1-v \right){{\left( x-1 \right)}^{2}}}{x} \nonumber \\ 
 && \ge \frac{v\left( 1-v \right){{\left( x-1 \right)}^{2}}}{x}\left( 1-\frac{1}{2{{x}^{v}}}+\frac{v\left( 1-v \right){{\left( x-1 \right)}^{2}}}{2x} \right) \nonumber \\ 
 && =\frac{v\left( 1-v \right){{\left( x-1 \right)}^{2}}}{x}\left( \frac{2{{x}^{v}}-1+v\left( 1-v \right){{x}^{v-1}}{{\left( x-1 \right)}^{2}}}{2{{x}^{v}}} \right). \label{ineq02_question0323}
\end{eqnarray}
Thus we have only to prove $2x^v -1 +v(1-v)x^{v-1}(x-1)^2 \geq 0$ for $0<x \leq 1$ and $0\leq v \leq \frac{1}{2}$. By putting $t = 1/x$, the above inequality becomes
$$
t^{-v-1} \left(2t-t^{v+1} +v(1-v)(t-1)^2 \right) \geq 0.
$$
Therefore it is sufficient to prove the inequality
$$
\mathfrak{g}_v(t) \equiv 2t-t^{v+1} +v(1-v)(t-1)^2 \geq 0,
$$
for $t \geq 1$  and $0 \leq v \leq \frac{1}{2}$. 
By some calculations, we have $\mathfrak{g}_v(t) \geq \mathfrak{g}_{1/2}(t) \geq \mathfrak{g}_{1/2}(1) =1>0$. Thus the proof of the inequality (\ref{ineq01_question0323}) was completed.\\
It should be mentioned here that the inequality (\ref{ineq01_question0323}) holds for $0 \leq v \leq 1$ and $x \geq \frac{1}{2}$ from (\ref{ineq02_question0323}).

\item[(IV)] It is natural to consider ${{m}_{v }}\left( x \right)$ and ${{M}_{v }}\left( x \right)$ are better than ${{K}^{r}}\left( x \right)$ and ${{K}^{R}}\left( x \right)$ under the assumption $0<x\le 1$.
\begin{itemize}
\item[(i)] In general, there is no ordering between ${{K}^{r}}\left( x \right)$ and ${{m}_{v }}\left( x \right)$. For this purpose, taking $v =0.3$ and $x=0.7$, then   
	\[{{m}_{v }}\left( x \right)-{{K}^{r}}\left( x \right)\approx 0.002.\] 
On the other hand, taking $v =0.7$ and $x=0.1$, we have
\[{{m}_{v }}\left( x \right)-{{K}^{r}}\left( x \right)\approx -0.15.\]
\item[(ii)] In addition, we have no ordering between ${{K}^{R}}\left( x \right)$ and ${{M}_{v }}\left( x \right)$. To see this putting $v =0.2$ and $x=0.4$, observe that 	
	\[{{K}^{R}}\left( x \right)-{{M}_{v }}\left( x \right)\approx 0.08.\] 
But if we choose $v =0.6$ and $x=0.3$ we get  
	\[{{K}^{R}}\left( x \right)-{{M}_{v }}\left( x \right)\approx -0.17.\] 
\end{itemize}
\end{itemize}
\hfill \qed

\section*{Appendix B}\label{app_A}
\vskip 0.4 true cm
\noindent{\it Proof of Proposition \ref{prop01_0330}.}
Since $g_{r,v}(t)$ is decreasing in $r$, $g_{r,v}(t) \geq g_{1,v}(t)$ so that we have only to prove for $t \geq 1$ and $0 \leq v \leq 1$, the inequality
$g_{1,v}(t) \geq \left\{ (1-v)+vt^{-1}\right\}^{-1}$
which is equivalent to the inequality by $v(t-1) \geq 0$
\begin{equation} \label{ineq02_0330}
\left(\frac{t+1}{2}\right)^{v-1} \geq \frac{1}{(1-v)t +v}.
\end{equation}
Since $t \geq 1$ and $0 \leq v \leq 1$, we have $t\left(\frac{t+1}{2}\right)^{v-1} \geq t^v$. In addition, for $t>0$ and $0\leq v \leq 1$, we have $t^v \geq \left\{ (1-v)+vt^{-1}\right\}^{-1}$. Thus we have $t\left(\frac{t+1}{2}\right)^{v-1} \geq \left\{ (1-v)+vt^{-1}\right\}^{-1}$ which implies the inequality (\ref{ineq02_0330}).
\hfill \qed

\medskip

\noindent{\it Proof of Proposition \ref{prop02_0330}.}
The first inequality is know for $t >0$ and $0 \leq v \leq 1$.
Since $g_{r,v}(t)$ is deceasing in $r$, in order to prove the second inequality we have only to prove $g_{1,v}(t) \geq t^v$, that is,
$$
v(t-1)\left( \frac{t+1}{2}\right)^{v-1} +1 \geq t^v.
$$
which is equivalent to the inequality
$$
\frac{t^v -1}{v} \leq (t-1) \left( \frac{t+1}{2}\right)^{v-1}.
$$
By the use of Hermite-Hadamard inequality with a convex function $x^{v-1}$ for $0\leq v \leq 1$ and $x >0$, the above inequality can be proven as
$$
\left( \frac{t+1}{2}\right)^{v-1} \leq \frac{1}{1-t} \int_t^1 x^{v-1} dx = \frac{1-t^v}{v(1-t)}. 
$$
\hfill \qed

\medskip

\noindent{\it Proof of Proposition \ref{prop05_0330}.}
We firstly prove $\mathsf{h}(t) \equiv 2(t-1)-\log t \geq 0$ for $c \leq t \leq 1$.
Since $\mathsf{h}''(t) \geq 0$, $\mathsf{h}(1)=0$ and $\mathsf{h}(c)  \approx -0.0000354367 <0.$
Thus we have $\mathsf{h}(t) \leq 0$ for $c \leq t \leq 1$.
Secondly we prove $\mathsf{l}_v(t) \equiv 2(t-1)-((1-v)t+v)\log t \leq 0$. Since $\frac{d\mathsf{l}_v(t)}{dv}=(t-1) \log t \geq 0$, we have $\mathsf{l}_v(t) \leq \mathsf{l}_1(t) =\mathsf{h}(t) \leq 0$.
Since $G_{r,v}(t)$ is decreasing in $r$, we have $G_{r,v}(t) \geq G_{1,v}(t)$ so that we have only to prove $G_{1,v}(t) \geq \left\{ (1-v)+vt^{-1}\right\}^{-1}$, which is equivalent to the inequality, by $v(t-1) \leq 0$
$$
 \frac{t^{v-1} +1}{2} \leq \frac{1}{(1-v)t+v},
$$
for $0 \leq r,v \leq 1$ and $c \leq t \leq 1$.
To this end, we set $\mathsf{f}_v(t) \equiv 2-( t^{v-1} +1)((1-v)t+v)$.
Some calculations imply $\mathsf{f}_v(t) \geq \mathsf{f}_1(t) =0$.
\hfill \qed

\medskip

\noindent{\it Proof of Proposition \ref{prop06_0330}.}
The first inequality is know for $t >0$ and $0 \leq v \leq 1$.
Since $G_{r,v}(t)$ is deceasing in $r$, in order to prove the second inequality we have only to prove $G_{1,v}(t) \geq t^v$,
which is equivalent to the inequality
$$
\frac{1}{2}v(t-1) \left(t^{v-1} +1 \right) +1 \geq t^v.
$$
To this end, we set
$$
\mathsf{k}_v(t) \equiv v(t-1)(t^{v-1} +1)+2-2t^v .
$$
Some calculations imply $\mathsf{k}_v(t) \geq \mathsf{k}_v(1) =0$.
\hfill \qed

\vskip 1.4 true cm

{\tiny $^1$Department of Information Science, College of Humanities and Sciences, Nihon University, 3-25-40, Sakurajyousui, Setagaya-ku, Tokyo, 156-8550, Japan.

{\it E-mail address:} furuichi@chs.nihon-u.ac.jp

\vskip 0.4 true cm

$^2$Young Researchers and Elite Club, Mashhad Branch, Islamic Azad University, Mashhad, Iran.

{\it E-mail address:} hrmoradi@mshdiau.ac.ir}
\end{document}